\documentclass[11pt, twoside]{article}
\usepackage{amssymb}
\usepackage{mathrsfs}
\usepackage{bbm}
\usepackage{cite}
\pdfoutput=1
\usepackage{amsfonts}
\usepackage{amsmath,amssymb}
\usepackage{multicol}

\usepackage{color}

\newtheorem{theorem}{Theorem}[section]
\newtheorem{lemma}[theorem]{Lemma}
\newtheorem{corollary}[theorem]{Corollary}

\newtheorem{definition}[theorem]{Definition}
\newtheorem{remark}[theorem]{Remark}

\numberwithin{equation}{section}
\newenvironment{proof}[1][Proof]{\noindent\textbf{#1.} }{\hfill $\Box$}

\allowdisplaybreaks \makeatletter\setlength{\textwidth}{15.0cm}
 \setlength{\oddsidemargin}{1.0cm}
\setlength{\evensidemargin}{1.0cm} \setlength{\textheight}{21.0cm}
\pagestyle{myheadings}\markboth{$~$ \hfill {\rm R. Li and S.-P. Tao} \hfill
$~$} {$~$ \hfill {\rm Sobolev regularity for a class of local fractional new maximal operators  } \hfill$~$}

\begin{document}
\author{Rui Li and Shuangping Tao \thanks{%
Corresponding author (taosp@nwnu.edu.cn).}  \\
\\College of Mathematics and Statistics, Northwest Normal University,\\
Lanzhou, Gansu 730070, China\\
{\small\it E-mail: lirui18894039032@163.com~~~taosp@nwnu.edu.cn}}
\date{}

\title{\vspace{-4cm}\textbf{Sobolev regularity for a class of local fractional new maximal operators }} \maketitle

$\mathbf{Abstract:}$ {This paper is devoted to studying the regularity properties for the new maximal operator $M_{\varphi}$ and the fractional new maximal operator $M_{\varphi,\beta}$ in the local case. Some new pointwise gradient estimates of $M_{\varphi,\Omega}$ and $M_{\varphi,\beta,\Omega}$ are given. Moreover, the boundedness of $M_{\varphi,\Omega}$ and $M_{\varphi,\beta,\Omega}$ on Sobolev space is established. As applications, we also obtain the bounds of the above operators on Sobolev space with zero boundary values.}
\\

$\mathbf{Keywords:}$ {Local fractional new maximal operator, regularity, Sobolev boundedness, partial derivative, pointwise gradient inequality}
\\

\textbf{AMS Subject Classification (2020)}:\ 46E35, 42B25, 47H99

\section{Introduction and main results}

\noindent

It is well known that fractional type maximal operators play important roles in harmonic analysis, partial differential equations, and other fields. In 2012, in order to study weighted $L^{p}$ inequalities for the pseudo-differential operators with smooth symbols and their commutators, Tang introduced a class of new maximal operator $M_{\varphi}$ in \cite{T6} as follows,
$$
M_{\varphi}f(x)={\sup\limits_{x\in Q}\frac{1}{\varphi(|Q|)|Q|}\int_{Q}|f(y)|\mathrm{d}y},
$$
where $\varphi(t)=(1+t)^{\gamma}$ for ${t}\geq{0}$ and ${\gamma}\geq{0}$, and the supremum is taken over all the cubes $Q$. Subsequently, the following fractional new maximal operator $M_{\varphi,\beta}$ was given in \cite{HC7},
$$
M_{\varphi,\beta}f(x)={\sup\limits_{x\in Q}\frac{1}{(\varphi(|Q|)|Q|)^{1-\frac{{\beta}}{n}}}\int_{Q}|f(y)|\mathrm{d}y},
$$
where $0\leq{\beta}<{n}$ and the supremum is taken over all the cubes $Q$. As $\beta=0$,  we denote $M_{\varphi,0}$  by $M_{\varphi}$ for simplicity, which is just the new maximal operator. Clearly, if $\varphi(|Q|)=1$, then $M_{\varphi}$ will reduce to the Hardy-Littlewood maximal operator $M$.

In this paper, we consider the regularity properties of $M_{\varphi,\beta}$ in the local case, but with the cubes $Q$ replaced by the balls $B$ because the local fractional new maximal operator over balls $B$ has better smoothing properties than the local fractional new maximal operator over cubes $Q$. Next, we give the definition of the local fractional new maximal operator.

\begin{definition}
\rm{Let $0\leq{\beta}<{n}$, $0\leq{\gamma}<\infty$, and $\Omega$ be a subdomain on $\mathbb{R}^{n}$. Then the local fractional new maximal operator is defined by
$$
{M}_{\varphi,\beta,\Omega}{f}(x)=\sup\limits_{0<{r}<{\mathrm{dist}(x,\Omega^{c})}}\frac{r^{\beta}(1+r^{n})^{\frac{\gamma\beta}{n}}}
{\varphi(|B(x,r)|)|B(x,r)|}\int_{B(x,r)}|f(y)|\mathrm{d}y,
$$
where $\Omega^{c}=\mathbb{R}^{n}\backslash{\Omega}$, $B=B(x,r)$ is denoted as a ball, $x$ and $r$ are its center and radius.}
\end{definition}

\begin{remark}
\rm{As $\gamma=0$, ${M}_{\varphi,\beta,\Omega}{f}(x)=:{M}_{\beta,\Omega}{f}(x)$ is called the local fractional maximal operator. Specially, as $\beta=0$, ${M}_{\varphi,0,\Omega}$ and ${M}_{0,\Omega}$ are coincide with the local new maximal operator and the local Hardy-Littlewood maximal operator, denoted by $M_{\varphi,\Omega}$ and ${M}_{\Omega}$, respectively. Obviously, if $\Omega=\mathbb{R}^{n}$, then ${M}_{\beta,\Omega}{f}(x)=:{M}_{\beta}{f}(x)$ is just the fractional maximal operator and the supremum is taken over all $r>0$.}
\end{remark}

The regularity properties of maximal type operators have been an active subject of current articles. This program was originally introduced in the global case $\Omega=\mathbb{R}^{n}$ by Kinnunen \cite{K1} who showed that the Hardy-Littlewood maximal operator $M$ is bounded on Sobolev space $W^{1,p}(\mathbb{R}^{n})$ for $1<p\leq{\infty}$. Where Sobolev space $W^{1,p}(\mathbb{R}^{n})$ is defined by
\begin{align}\label{001}
W^{1,p}(\mathbb{R}^{n})=\{f:\mathbb{R}^{n}\rightarrow{\mathbb{R}}: \|f\|_{W^{1,p}(\mathbb{R}^{n})}=\|f\|_{L^{p}(\mathbb{R}^{n})}
+\|\nabla{f}\|_{L^{p}(\mathbb{R}^{n})}<\infty\},
\end{align}
here $\nabla{f}=(D_{1}f,D_{2}f,\cdots,D_{n}f)$ denotes the weak gradient of $f$, $D_{i}f(i=1,2,\cdots,n)$ denote the weak partial derivatives of $f$. The pointwise gradient estimate of $M$ was also obtained in \cite{K1} as follows,
$$
|\nabla{{M}{f}}(x)|\leq{{M}{|\nabla{f}|}(x)},
$$
for almost every $x\in{\mathbb{R}^{n}}$. Since then, the regularity properties of $M$ have been studied extensively, for example see \cite{AP1, HO11, K11, L11, T11}. In 2003, the work of Kinnunen was extended to the fractional setting in \cite{KS},
$$
|\nabla{{M}_{\beta}{f}}(x)|\leq{{M}_{\beta}{|\nabla{f}|}(x)}~~~\mathrm{and}~~~ |\nabla{{M}_{\beta}{f}}(x)|\leq{C{M}_{\beta-1}{f}(x)},
$$
where $f\in{W^{1,p}(\mathbb{R}^{n})}$ and $f\in{L^{p}(\mathbb{R}^{n})}$, respectively. As a generalization for the regularity of maximal operators in the global setting, it is more difficult to discuss the pointwise gradient estimates and Sobolev  boundedness of $M$ and $M_{\beta}$ in the local case because the local maximal operators lack commutativity with translations. The first attempt was made by Kinnunen and Lindqvist in \cite{KL6}. They proved that if $f\in{W^{1,p}(\Omega)}(1<p<\infty)$, then $M_{\Omega}$ is bounded on Sobolev space $W^{1,p}(\Omega)$ by establishing the following pointwise gradient estimate
$$
|\nabla{{M}_{\Omega}{f}}(x)|\leq{2{M}_{\Omega}{|\nabla{f}|}(x)},
$$
for almost every $x\in{\Omega}$. Where the definition of $W^{1,p}(\Omega)$ is the same as \eqref{001}, but using $\Omega$ instead of $\mathbb{R}^{n}$. For the local fractional case, it wasn't until 2015 that Heikkinen et al. obtained the following inequalities in \cite{HKK6},
\begin{align}\label{101}
|\nabla{{M}_{\beta,\Omega}{f}}(x)|
\leq{\beta{M}_{\beta-1,\Omega}{f}(x)+2{M}_{\beta,\Omega}{|\nabla{f}|}(x)},
\end{align}
and there exists a positive constant $C$ depends only on $n$, such that
\begin{align}\label{102}
|\nabla{{M}_{\beta,\Omega}{f}}(x)|
\leq{C({M}_{\beta-1,\Omega}{f}(x)+{S}_{\beta-1,\Omega}{f}(x))},
\end{align}
for almost every $x\in{\Omega}$. Noting that the inequality \eqref{101} cannot be obtained as $\Omega$ has infinite measure. Thus, Heikkinen et al. \cite{HKK6} added the condition $|\Omega|<\infty$ in the inequality \eqref{101}. Another feature is that the inequalities \eqref{101} and \eqref{102} contain the extra terms ${M}_{\beta-1,\Omega}$ and ${S}_{\beta-1,\Omega}$ cannot be dismissed. Here ${S}_{\beta,\Omega}$ is the local spherical fractional maximal operator, defined by
$$
{S}_{\beta,\Omega}{f}(x)=\sup\limits_{0<{r}<{\mathrm{dist}(x,\Omega^{c})}}\frac{r^{\beta}}
{|\partial{B(x,r)}|}\int_{\partial{B(x,r)}}|f(y)|\mathrm{d}\mathcal{H}^{n-1}(y),
$$
where $\mathrm{d}\mathcal{H}^{n-1}$ denotes the normalized (n-1)-dimensional Hausdorff measure, $|\partial{B(x,r)|}=n\varpi_{n}r^{n-1}$, and $\varpi_{n}$ is the volume of the unit ball on $\mathbb{R}^{n}$. The further development about the regularity of maximal operators, we can see \cite{AA, B1, HLX2, KI, LT, LW, LWX1, LWX2, LXY, LZ, S, ZL} and so on.

Inspired by the works above, especially the results in \cite{HKK6, HLX2}, the  main purpose of this paper is to consider the pointwise estimates for the weak gradient of ${M}_{\varphi,\beta,\Omega}$ with $W^{1,p}(\Omega)$ and $L^{p}(\Omega)$ functions and the boundedness of ${M}_{\varphi,\beta,\Omega}$ on Sobolev space and Sobolev space with zero boundary values. We expect that the inequalities \eqref{101} and \eqref{102} would also hold for ${M}_{\varphi,\beta,\Omega}$, but this is not true as such. To achieve our desired results, we have to consider two cases $0<r<1$ and $1\leq{r}<\mathrm{dist}(x,\Omega^{c})$ and obtain different conclusions under different conditions. What should be stressed is that some results in \cite{HKK6, HLX2} can be concluded by our theorems and corollary as $\gamma=0$. Thus, we extend some results in \cite{HKK6, HLX2}.

Before stating the main results, we define the following local spherical fractional new maximal operator,
$$
{S}_{\varphi,\beta,\Omega}{f}(x)=\sup\limits_{0<{r}<{\mathrm{dist}(x,\Omega^{c})}}\frac{r^{\beta}(1+r^{n})^{\frac{\gamma\beta}{n}}}
{\varphi(|\partial{B(x,r)|})|\partial{B(x,r)}|}\int_{\partial{B(x,r)}}|f(y)|\mathrm{d}\mathcal{H}^{n-1}(y),
$$
where $0\leq{\beta}<{n}$ and $0\leq{\gamma}<\infty$.

Our main results are formulated as follows. We first give the pointwise gradient estimates for ${M}_{\varphi,\beta,\Omega}$ with $W^{1,p}(\Omega)$ function.

\begin{theorem}\label{thm0}
Let $f\in{W^{1,p}(\Omega)}$ with $1<p<n$, $\frac{1}{q}=\frac{1}{p}-\frac{\beta}{n}$, and $1\leq{\beta}<\frac{n}{p}$. Then

(i)~~If $0<r<1$, $0\leq\gamma\leq{n}$, and one of the following conditions holds:

~~~(a)~~$\Omega$ admits a p-Sobolev embedding;

~~~(b)~~$|\Omega|<\infty$ and $\frac{1}{q}=\frac{1}{p}-\frac{\beta-1}{n}$.\\
Then ${{M}_{\varphi,\beta,\Omega}{f}}\in{W^{1,q}(\Omega)}$. Moreover, for almost every $x\in{\Omega}$, it holds that
$$
|\nabla{{M}_{\varphi,\beta,\Omega}{f}}(x)|
\leq{(\gamma{n}+2\beta){M}_{\varphi,\beta-1,\Omega}{f}(x)
+2{M}_{\varphi,\beta,\Omega}{|\nabla{f}|}(x)}.
$$

(ii)~~If $1\leq{r}<\mathrm{dist}(x,\Omega^{c})$, then ${{M}_{\varphi,\beta,\Omega}{f}}\in{W^{1,q}(\Omega)}$. Moreover, for almost every $x\in{\Omega}$, we have
$$
|\nabla{{M}_{\varphi,\beta,\Omega}{f}}(x)|
\leq{(\gamma{n}+\beta){M}_{\varphi,\beta,\Omega}{f}(x)+2{M}_{\varphi,\beta,\Omega}{|\nabla{f}|}(x)}.
$$
\end{theorem}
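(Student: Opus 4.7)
The plan is to adapt the Kinnunen--Lindqvist / Heikkinen--Kinnunen--Korte strategy (cf.\ \cite{KL6, HKK6}) to the new weight $\varphi(t)=(1+t)^{\gamma}$. The central object is the admissible average
$$u_r(x) := \frac{r^{\beta}(1+r^n)^{\gamma\beta/n}}{\varphi(|B(x,r)|)|B(x,r)|} \int_{B(x,r)} |f(y)|\,dy, \qquad 0 < r < d(x) := \mathrm{dist}(x,\Omega^c),$$
so that $M_{\varphi,\beta,\Omega}f(x) = \sup_{0<r<d(x)} u_r(x)$. Because $\varphi(|B(x,r)|)=(1+\varpi_n r^n)^{\gamma}$ and $|B(x,r)|=\varpi_n r^n$ depend only on $r$, the translation-invariance identity $\int_{B(x,r)}|f|(y)\,dy=\int_{B(0,r)}|f|(x+z)\,dz$ shows that $u_r$ is weakly differentiable in $x$ with
$$\nabla u_r(x) = \frac{r^{\beta}(1+r^n)^{\gamma\beta/n}}{\varphi(|B(x,r)|)|B(x,r)|} \int_{B(x,r)} \nabla|f|(y)\,dy,$$
so that $|\nabla u_r(x)|\leq M_{\varphi,\beta,\Omega}|\nabla f|(x)$ whenever $r<d(x)$.

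To pass from the bound on individual $u_r$ to a bound on $M_{\varphi,\beta,\Omega}f$, I would enumerate a countable dense set $\{r_k\}\subset\mathbb{Q}_+$ and use continuity of $r\mapsto u_r(x)$ to write $M_{\varphi,\beta,\Omega}f(x)=\sup_k u_{r_k}(x)$, with the supremum restricted to indices satisfying $r_k<d(x)$. The standard lattice fact (a countable supremum of $W^{1,q}$ functions is again in $W^{1,q}$, with $|\nabla\sup_k v_k|\leq\sup_k|\nabla v_k|$ almost everywhere) then places $M_{\varphi,\beta,\Omega}f$ in $W^{1,q}_{\mathrm{loc}}(\Omega)$. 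Global $L^q$-integrability follows from the $L^p\to L^q$ mapping of $M_{\varphi,\beta,\Omega}$ established in \cite{HC7}: directly under the $p$-Sobolev embedding of $\Omega$ in case (i)(a), and via the shifted exponent $1/q=1/p-(\beta-1)/n$ together with $|\Omega|<\infty$ in case (i)(b).

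The technical heart of the argument is the production of the extra lower-order terms $(\gamma n + 2\beta)M_{\varphi,\beta-1,\Omega}f(x)$ in case (i) and $(\gamma n + \beta)M_{\varphi,\beta,\Omega}f(x)$ in case (ii), which arise from the moving admissibility threshold $r<d(x)$. Following the difference-quotient argument of \cite{HKK6}, for small $|h|$ and a near-optimal radius $r<d(x+h)$ that may violate $r<d(x)$, one replaces $r$ by the safely admissible surrogate $r'=r-|h|$ at $x$. The comparison of $u_r(x+h)$ with $u_{r'}(x)$ generates corrections both from the ratio of weights $r^{\beta}(1+r^n)^{\gamma\beta/n}/((r')^{\beta}(1+(r')^n)^{\gamma\beta/n})$ and from the change of integration domain from $B(x+h,r)$ to $B(x,r')$. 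A Taylor expansion in $|h|$ of the weight ratio separates off a contribution of order $\beta/r$ from the $r^{\beta}$-factor and a contribution of order $\gamma n\cdot r^{n-1}(1+r^n)^{-1}$ from the $(1+r^n)^{\gamma\beta/n}/(1+\varpi_n r^n)^{\gamma}$-factor. In the regime $r<1$ both can be rebased onto the weight of $M_{\varphi,\beta-1,\Omega}$, yielding the coefficient $\gamma n + 2\beta$; in the regime $r\geq 1$ the loss $r^{n-1}$ cannot be cheaply traded against $r^{-1}$, so only the weight of $M_{\varphi,\beta,\Omega}$ itself can absorb the correction, yielding $\gamma n + \beta$. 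The constant $2$ in front of $M_{\varphi,\beta,\Omega}|\nabla f|$ arises from the same shift, via the geometric containment $B(x+h,r)\subset B(x,r+|h|)$ after passing to the limit $|h|\to 0$.

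The main obstacle I anticipate is the careful bookkeeping of the derivatives of the composite weight $r^{\beta}(1+r^n)^{\gamma\beta/n}/((1+\varpi_n r^n)^{\gamma}\varpi_n r^n)$, in particular verifying that the $\gamma$-dependent factors combine to give precisely the coefficient $\gamma n$ uniformly over both regimes $r<1$ and $r\geq 1$. Once this computation is pinned down, the pointwise gradient estimates follow by combining the weight derivative bounds with $|\nabla u_r(x)|\leq M_{\varphi,\beta,\Omega}|\nabla f|(x)$ and the countable-sup lemma above.
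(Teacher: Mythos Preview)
Your proposal conflates two incompatible strategies and, as written, has a genuine gap. In paragraph~2 you invoke the lattice fact for the supremum of the fixed-radius averages $u_{r_k}$, but each $u_r$ is defined only on the subdomain $\{x:\ d(x)>r\}$, and the supremum defining $M_{\varphi,\beta,\Omega}f(x)$ runs over radii up to $d(x)$. The family $\{u_{r_k}\}$ is therefore not a family of $W^{1,q}(\Omega)$ functions (nor of $W^{1,q}(K)$ functions for any fixed compact $K$ once $r_k$ exceeds $\min_K d$), so the lattice bound $|\nabla\sup_k u_{r_k}|\le\sup_k|\nabla u_{r_k}|$ is unavailable. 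This is exactly the obstruction that distinguishes the local case from Kinnunen's original argument on $\mathbb{R}^n$. Your paragraph~3 then switches to a difference-quotient argument to manufacture the lower-order terms; that route could in principle be made self-contained, but it does not combine with the lattice step, and you would have to carry the full argument (weak differentiability included) through difference quotients. Incidentally, this is not ``the difference-quotient argument of \cite{HKK6}'': neither \cite{KL6} nor \cite{HKK6} argues that way.

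The paper (following \cite{KL6,HKK6}) resolves the domain problem by reparametrising the radius as $r=l\sigma(x)$ with $\sigma(x)=\mathrm{dist}(x,\Omega^c)$ and $0<l<1$, and working with the averages
\[
K_{\varphi,\beta}^{l}f(x)=\frac{(l\sigma(x))^{\beta}(1+(l\sigma(x))^{n})^{\gamma\beta/n}}{\varphi(|B(x,l\sigma(x))|)\,|B(x,l\sigma(x))|}\int_{B(x,l\sigma(x))}f(y)\,\mathrm{d}y .
\]
Each $K_{\varphi,\beta}^{l}f$ is defined on all of $\Omega$, and $M_{\varphi,\beta,\Omega}f=\sup_{l\in\mathbb{Q}\cap(0,1)}K_{\varphi,\beta}^{l}f$, so the lattice argument applies cleanly. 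The extra terms do not come from a shift $r\mapsto r-|h|$; they fall out of a direct computation of $\nabla K_{\varphi,\beta}^{l}f(x)$ via the chain rule in $l\sigma(x)$ (using $|\nabla\sigma|=1$ a.e.), which produces derivatives of the composite weight and a spherical term $\int_{\partial B}f\,\mathrm{d}\mathcal{H}^{n-1}$. The spherical term is then converted back into a volume integral of $\nabla f$ by Green's first identity with $\mu(y)=\tfrac12|y-x|^2$, and the two regimes $l\sigma(x)<1$ and $l\sigma(x)\ge1$ are handled by elementary bounds on $(l\sigma)^n/(1+(l\sigma)^n)$ and $\varphi(|B|)^{1/n}$, giving the constants $(\gamma n+2\beta)$ and $(\gamma n+\beta)$ respectively. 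Passage from smooth $f$ to general $f\in W^{1,p}(\Omega)$ uses Lemma~\ref{lem2.1} together with either the $p$-Sobolev embedding or H\"older's inequality with $|\Omega|<\infty$, and the stability lemma (Lemma~\ref{lem2.3}). Your anticipated ``main obstacle'' (the weight bookkeeping) is in fact routine once the parametrisation is right; the point you are missing is the $l\sigma(x)$ device itself.
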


\begin{remark}
\rm{Note that the condition $\Omega$ admits a p-Sobolev embedding or the condition $|\Omega|<\infty$ in $(ii)$ of Theorem \ref{thm0} is not needed. That is, in order to achieve our results, we only need to add the condition $\Omega$ admits a p-Sobolev embedding or the condition $|\Omega|<\infty$ as $0<r<1$. In addition, the condition $(a)$ is better than condition $(b)$ because $\Omega$ may have unbounded measure and $q$ has better value as the condition $(a)$ holds.}
\end{remark}

In addition, the pointwise gradient estimates for ${M}_{\varphi,\beta,\Omega}$ with $L^{p}(\Omega)$ function are obtained as follows.

\begin{theorem}\label{thm2}
Let $f\in{L^{p}(\Omega)}$, $n\geq{2}$, and $p>{\frac{n}{n-1}}$. Then

(i)~~If $0<r<1$ and $\frac{1}{q}=\frac{1}{p}-\frac{\beta-1}{n}$ with $1\leq{\beta}<\min\{{n-\frac{2n}{p(n-1)}},\frac{n-1}{p}\}+1$, then $|\nabla{{M}_{\varphi,\beta,\Omega}{f}}|\in{L^{q}(\Omega)}$. Moreover, there is a constant $C>0$, depending on $\gamma$ and $n$, such that for almost every $x\in{\Omega}$, there holds
$$
|\nabla{{M}_{\varphi,\beta,\Omega}{f}}(x)|
\leq{C({M}_{\varphi,\beta-1,\Omega}{f}(x)+{S}_{\varphi,\beta-1,\Omega}{f}(x))}.
$$

(ii)~~If $1\leq{r}<\mathrm{dist}(x,\Omega^{c})$ and $\frac{1}{q}=\frac{1}{p}-\frac{\beta}{n}$ with $1\leq{\beta}<\min\{{n-\frac{2n}{p(n-1)}},\frac{n-1}{p}\}$, then $|\nabla{{M}_{\varphi,\beta,\Omega}{f}}|\in{L^{q}(\Omega)}$. Moreover, for almost every $x\in{\Omega}$, there exists a positive constant $C$ which depends on $\gamma$ and $n$, such that
$$
|\nabla{{M}_{\varphi,\beta,\Omega}{f}}(x)|
\leq{C({M}_{\varphi,\beta,\Omega}{f}(x)+{S}_{\varphi,\beta,\Omega}{f}(x))}.
$$
\end{theorem}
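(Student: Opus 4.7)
My plan is to adapt the Heikkinen-Kinnunen-Korte argument that yields \eqref{102} to the weighted operator $M_{\varphi,\beta,\Omega}$, splitting the analysis according to whether $0<r<1$ or $r\geq 1$ in order to accommodate the factor $(1+r^{n})^{\gamma\beta/n}$. Fix $x\in\Omega$ and, for each $r\in(0,\mathrm{dist}(x,\Omega^{c}))$, write
\[
u_r(x):=\frac{r^{\beta}(1+r^{n})^{\gamma\beta/n}}{\varphi(\varpi_n r^{n})\varpi_n r^{n}}\int_{B(x,r)}|f(y)|\,\mathrm{d}y,
\]
so that $M_{\varphi,\beta,\Omega}f(x)=\sup_r u_r(x)$. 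Since $|B(x,r)|=\varpi_n r^{n}$ does not depend on $x$, translating the ball and differentiating under the integral yields
\[
\nabla_x u_r(x)=\frac{r^{\beta}(1+r^{n})^{\gamma\beta/n}}{\varphi(\varpi_n r^{n})\varpi_n r^{n}}\int_{\partial B(x,r)}|f(y)|\,\nu(y)\,\mathrm{d}\mathcal{H}^{n-1}(y),
\]
where $\nu(y)=(y-x)/r$, so $|\nabla_x u_r(x)|\leq \sqrt{n}\cdot(\mathrm{prefactor})\cdot\int_{\partial B(x,r)}|f|\,\mathrm{d}\mathcal{H}^{n-1}$. Rewriting $|\partial B(x,r)|=n\varpi_n r^{n-1}$ inside, I would control this in case (i), $0<r<1$, by a constant multiple of $S_{\varphi,\beta-1,\Omega}f(x)$, using $(1+r^{n})^{\gamma/n}\leq 2^{\gamma/n}$ and $\varphi(n\varpi_n r^{n-1})/\varphi(\varpi_n r^{n})\leq (1+n\varpi_n)^{\gamma}$; and in case (ii), $r\geq 1$, by a constant multiple of $S_{\varphi,\beta,\Omega}f(x)$, using $r^{-1}\leq 1$ and $\varphi(n\varpi_n r^{n-1})/\varphi(\varpi_n r^{n})\leq (2n)^{\gamma}$.

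Next I would account for the dependence of the supremum on the moving domain: by Luiro's technique, for almost every $x$ the supremum is attained at some $r_x\in(0,\mathrm{dist}(x,\Omega^{c})]$. If $r_x$ lies in the interior, then $\nabla M_{\varphi,\beta,\Omega}f(x)=\nabla_x u_{r_x}(x)$ and the previous step already produces the $S$-term. In the boundary case $r_x=\mathrm{dist}(x,\Omega^{c})$, the chain rule contributes an extra term $\partial_r u_r(x)|_{r=r_x}\nabla\mathrm{dist}(x,\Omega^{c})$ with $|\nabla\mathrm{dist}(\cdot,\Omega^{c})|\leq 1$ a.e.; differentiating the prefactor directly shows that $|\partial_r u_r(x)|$ is bounded by a surface term (absorbed into the $S$-bound of the previous paragraph) plus a volume term of size $C(\gamma,n)r^{\beta-1}(1+r^{n})^{\gamma\beta/n}/[\varphi(\varpi_n r^{n})\varpi_n r^{n}]\cdot\int_{B(x,r)}|f|$. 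Multiplying the latter by $(1+r^{n})^{\gamma/n}\leq 2^{\gamma/n}$ in case (i) absorbs it into $M_{\varphi,\beta-1,\Omega}f(x)$, while in case (ii) the factor $r^{-1}\leq 1$ absorbs it into $M_{\varphi,\beta,\Omega}f(x)$. Combining the $S$-contribution and the $M$-contribution yields the claimed pointwise inequalities.

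The $L^q$-integrability of $|\nabla M_{\varphi,\beta,\Omega}f|$ then follows from the $L^p\rightarrow L^q$ boundedness of $M_{\varphi,\beta-1,\Omega}$ and $S_{\varphi,\beta-1,\Omega}$ in case (i), and of $M_{\varphi,\beta,\Omega}$ and $S_{\varphi,\beta,\Omega}$ in case (ii); the spherical bounds are precisely where the restrictions $p>n/(n-1)$, $\beta-1<(n-1)/p$, and $\beta<n-2n/(p(n-1))$ (with the appropriate shift) enter, via Stein's spherical maximal theorem combined with the fractional shift. The main obstacle I expect is justifying the differentiation of $M_{\varphi,\beta,\Omega}f$ without weak derivatives of $f$ at hand: this requires first establishing that $M_{\varphi,\beta,\Omega}f$ is locally Lipschitz on $\Omega$ (using the uniform-in-$r$ bound on $|\nabla_x u_r|$ on compact subsets, coupled with a careful treatment of $r$ near $\mathrm{dist}(x,\Omega^{c})$), and then identifying $\nabla M_{\varphi,\beta,\Omega}f$ with the attaining-radius expression via Luiro's selection argument at points of differentiability.
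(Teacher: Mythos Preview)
Your route via Luiro's attaining-radius selection is genuinely different from the paper's. The paper follows the Kinnunen--Lindqvist scheme: it parametrises the radius as $r=l\sigma(x)$ with $\sigma(x)=\mathrm{dist}(x,\Omega^{c})$ and $l\in(0,1)$, so that each averaging operator $K_{\varphi,\beta}^{l}f$ is defined on all of $\Omega$; it proves the pointwise gradient bound for $K_{\varphi,\beta}^{l}f$ first for smooth $f$ (using Gauss's theorem to turn $\int_{B}\nabla f$ into a surface integral) and then for general $f\in L^{p}(\Omega)$ by density and the weak-limit inequality lemma; finally it writes $M_{\varphi,\beta,\Omega}f=\sup_{k}K_{\varphi,\beta}^{l_{k}}f$ over a countable enumeration of rationals, bounds the partial maxima uniformly in $W^{1,q}$, and invokes weak compactness. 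At no point is any a~priori differentiability of $M_{\varphi,\beta,\Omega}f$ required. Your approach, by contrast, would give finer information (separating the interior-radius case, where only the $S$-term appears, from the boundary-radius case), at the cost of needing to justify differentiation of the maximal function itself.

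That justification is where your plan has a genuine gap. You propose to show that $M_{\varphi,\beta,\Omega}f$ is locally Lipschitz via a uniform-in-$r$ bound on $|\nabla_{x}u_{r}(x)|$; but your own computation gives only
\[
|\nabla_{x}u_{r}(x)|\;\lesssim\;\frac{r^{\beta}(1+r^{n})^{\gamma\beta/n}}{\varphi(|\partial B(x,r)|)\,|\partial B(x,r)|}\int_{\partial B(x,r)}|f|\,\mathrm{d}\mathcal{H}^{n-1},
\]
which is controlled by $S_{\varphi,\beta-1,\Omega}f(x)$ (respectively $S_{\varphi,\beta,\Omega}f(x)$), an $L^{q}$ function, not an $L^{\infty}_{\mathrm{loc}}$ one. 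For a generic $f\in L^{p}(\Omega)$ the spherical averages of $|f|$ over $\partial B(x,r)$ need not be bounded in $r$, so the local Lipschitz claim is not available in general, and without it Luiro's selection argument does not start. A second, related difficulty is that your fixed-radius functions $u_{r}$ live only on $\{x:\mathrm{dist}(x,\Omega^{c})>r\}$, so a direct lattice-supremum fallback is blocked as well. The clean fix is exactly what the paper does: establish the pointwise inequality first for $f\in L^{p}(\Omega)\cap\mathcal{C}^{\infty}(\Omega)$ (where your Luiro argument would in fact succeed), then pass to general $f$ by approximation and weak limits, bypassing any direct differentiability claim for $M_{\varphi,\beta,\Omega}f$.
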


Next, we consider the boundedness of ${M}_{\varphi,\beta,\Omega}$ on Sobolev space.

\begin{theorem}\label{thm3}
Suppose that $1<p<n$, $1\leq{\beta}<\frac{n}{p}$, $0\leq\gamma\leq{n}$, and $\frac{1}{q}=\frac{1}{p}-\frac{\beta}{n}$. If $f\in{W^{1,p}(\Omega)}$ and $\Omega$ admits a p-Sobolev embedding, then there is a constant $C>0$ such that
$$
\|{M}_{\varphi,\beta,\Omega}{f}\|_{W^{1,q}(\Omega)}
\leq{C\|f\|_{W^{1,p}(\Omega)}}.
$$
\end{theorem}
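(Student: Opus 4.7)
The plan is to split the Sobolev norm as
$$\|M_{\varphi,\beta,\Omega}f\|_{W^{1,q}(\Omega)} = \|M_{\varphi,\beta,\Omega}f\|_{L^{q}(\Omega)} + \|\nabla M_{\varphi,\beta,\Omega}f\|_{L^{q}(\Omega)}$$
and treat the two summands separately. The key ingredients will be the pointwise gradient bounds of Theorem \ref{thm0}, the classical Hardy--Littlewood--Sobolev $L^{p}\to L^{q}$ estimate for the fractional maximal operator $M_{\beta}$, and the $p$-Sobolev embedding hypothesis.

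For the $L^{q}$ piece, I first establish the pointwise domination
$$M_{\varphi,\beta,\Omega}f(x) \le C\, M_{\beta}f(x) \quad \text{a.e. } x\in\Omega,$$
(extending $f$ by zero outside $\Omega$), with $C=C(n,\gamma,\beta)$. The verification is a two-scale case analysis on the radius. For $0<r<1$, both $(1+r^{n})^{\gamma\beta/n}$ and $\varphi(|B(x,r)|)=(1+\varpi_{n}r^{n})^{\gamma}$ are trapped between absolute constants, so the prefactor reduces to a multiple of $r^{\beta}/|B(x,r)|$. For $r\ge 1$, using $(1+r^{n})^{\gamma\beta/n}\le 2^{\gamma\beta/n}r^{\gamma\beta}$ and $(1+\varpi_{n}r^{n})^{\gamma}\ge \varpi_{n}^{\gamma}r^{n\gamma}$, the prefactor is bounded by a constant times $r^{(1+\gamma)(\beta-n)}\le r^{\beta-n}$, again comparable to $r^{\beta}/|B(x,r)|$. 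The Hardy--Littlewood--Sobolev bound for $M_{\beta}$, valid in the range $1<p<n/\beta$ with $1/q=1/p-\beta/n$, then gives
$$\|M_{\varphi,\beta,\Omega}f\|_{L^{q}(\Omega)} \le C\|f\|_{L^{p}(\Omega)} \le C\|f\|_{W^{1,p}(\Omega)}.$$

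For the gradient piece, I write $M_{\varphi,\beta,\Omega}f=\max(U,V)$, where $U$ and $V$ are the sub-operators obtained by restricting the supremum to $r\in(0,1)$ and to $r\in[1,\mathrm{dist}(x,\Omega^{c}))$, respectively. Applying Theorem \ref{thm0}(i)(a) to $U$ (its hypothesis (a) is precisely our $p$-Sobolev embedding assumption) and Theorem \ref{thm0}(ii) to $V$, and using the standard bound $|\nabla\max(u,v)|\le|\nabla u|+|\nabla v|$, yields the pointwise inequality
$$|\nabla M_{\varphi,\beta,\Omega}f(x)| \le C\bigl(M_{\varphi,\beta-1,\Omega}f(x) + M_{\varphi,\beta,\Omega}f(x) + M_{\varphi,\beta,\Omega}|\nabla f|(x)\bigr).$$
Taking $L^{q}(\Omega)$-norms, the last two terms are bounded by the preceding step applied to $f$ and to $|\nabla f|$, giving $C\|f\|_{W^{1,p}(\Omega)}$. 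For the first term I exploit the identity $1/q=1/p^{*}-(\beta-1)/n$ with $p^{*}=np/(n-p)$: the same pointwise dominance by $M_{\beta-1}$ together with HLS gives
$$\|M_{\varphi,\beta-1,\Omega}f\|_{L^{q}(\Omega)} \le C\|f\|_{L^{p^{*}}(\Omega)},$$
and the $p$-Sobolev embedding $W^{1,p}(\Omega)\hookrightarrow L^{p^{*}}(\Omega)$ furnished by the hypothesis on $\Omega$ closes the estimate.

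The main obstacle I foresee is the uniform pointwise inequality $M_{\varphi,\beta,\Omega}f\le CM_{\beta}f$ across both radius regimes; in particular, one must check that the interaction of the weight $\varphi(|B(x,r)|)$ with the prefactor $(1+r^{n})^{\gamma\beta/n}$ is benign for all $r>0$. Once this comparison is established, the remainder of the argument is a clean assembly of Theorem \ref{thm0}, the classical Hardy--Littlewood--Sobolev inequality, and the $p$-Sobolev embedding.
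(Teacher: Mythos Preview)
Your proposal is correct and follows essentially the same route as the paper. The paper also splits into the two radius regimes of Theorem~\ref{thm0} and then bounds the $L^{q}$ norms of $M_{\varphi,\beta,\Omega}f$, $M_{\varphi,\beta-1,\Omega}f$, and $M_{\varphi,\beta,\Omega}|\nabla f|$ via Lemma~\ref{lem2.1} together with the Sobolev embedding $W^{1,p}(\Omega)\hookrightarrow L^{\tilde q}(\Omega)$, $\tfrac{1}{\tilde q}=\tfrac{1}{p}-\tfrac{1}{n}$; your explicit pointwise comparison $M_{\varphi,\beta,\Omega}f\le C M_{\beta}f$ plus HLS is exactly the content of that lemma, and your $\max(U,V)$ decomposition with $|\nabla\max(u,v)|\le|\nabla u|+|\nabla v|$ makes precise what the paper's two-case argument leaves implicit.
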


\begin{theorem}\label{thm4}
Let $1<p<n$, $1\leq{\beta}<\frac{n}{p}$, $\frac{1}{q}=\frac{1}{p}-\frac{\beta-1}{n}$, and $0\leq\gamma\leq{n}$. If $f\in{W^{1,p}(\Omega)}$ and $|\Omega|<\infty$, then there exists a positive constant $C$ such that
$$
\|{M}_{\varphi,\beta,\Omega}{f}\|_{W^{1,q}(\Omega)}
\leq{C\|f\|_{W^{1,p}(\Omega)}}.
$$
\end{theorem}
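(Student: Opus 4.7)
The plan is to derive the bound from the pointwise gradient estimates in Theorem~\ref{thm0} together with the $L^p\to L^{q}$ mapping properties of the local fractional new maximal operators. Decompose $M_{\varphi,\beta,\Omega}f=\max\{M^{(1)}_{\varphi,\beta,\Omega}f,\,M^{(2)}_{\varphi,\beta,\Omega}f\}$, where the supremum defining $M^{(1)}$ is restricted to $0<r<1$ and that defining $M^{(2)}$ to $1\le r<\mathrm{dist}(x,\Omega^c)$. Since $|\Omega|<\infty$, Theorem~\ref{thm0}(i)(b) applies to $M^{(1)}$ with the required relation $\frac{1}{q}=\frac{1}{p}-\frac{\beta-1}{n}$, and Theorem~\ref{thm0}(ii) applies to $M^{(2)}$. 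Using the standard weak-derivative inequality $|\nabla\max\{u,v\}|\le|\nabla u|+|\nabla v|$ a.e., I obtain for a.e.\ $x\in\Omega$
\begin{equation*}
|\nabla M_{\varphi,\beta,\Omega}f(x)|\le(\gamma n+2\beta)M_{\varphi,\beta-1,\Omega}f(x)+(\gamma n+\beta)M_{\varphi,\beta,\Omega}f(x)+4 M_{\varphi,\beta,\Omega}|\nabla f|(x).
\end{equation*}

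I then take the $L^q(\Omega)$ norm of both sides and bound the three terms separately. The first term is handled directly by the strong $(p,q)$-boundedness of $M_{\varphi,\beta-1,\Omega}$, whose natural Sobolev exponent is exactly the $q$ in the hypothesis. For the second and third terms, the natural Sobolev exponent of $M_{\varphi,\beta,\Omega}$ is $q_{1}$ with $\frac{1}{q_{1}}=\frac{1}{p}-\frac{\beta}{n}$, which satisfies $q<q_{1}$. Because $|\Omega|<\infty$, H\"older's inequality gives the embedding $\|g\|_{L^q(\Omega)}\le|\Omega|^{1/q-1/q_{1}}\|g\|_{L^{q_{1}}(\Omega)}$; combining this with the $L^p\to L^{q_{1}}$ boundedness of $M_{\varphi,\beta,\Omega}$ controls the second term by $\|f\|_{L^p(\Omega)}$ and the third by $\|\nabla f\|_{L^p(\Omega)}$. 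The zeroth-order contribution $\|M_{\varphi,\beta,\Omega}f\|_{L^q(\Omega)}$ to the Sobolev norm is estimated in the same way, and summing yields the theorem.

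The main obstacle is reconciling the two exponents $q$ and $q_{1}$: the statement is phrased at the ``smaller'' Sobolev exponent associated with order $\beta-1$ rather than order $\beta$, and it is precisely the finite-measure hypothesis $|\Omega|<\infty$ that absorbs this mismatch through H\"older's inequality. A secondary routine check is that the weight $(1+r^n)^{\gamma\beta/n}/\varphi(|B(x,r)|)=(1+r^n)^{\gamma\beta/n}/(1+\varpi_{n}r^{n})^{\gamma}$ is uniformly bounded on $r>0$ (it equals $1$ at $r=0$ and decays like $r^{\gamma(\beta-n)}$ at infinity, since $\beta<n$ and $\gamma\ge 0$), so that the classical $L^p\to L^{q_{1}}$ boundedness of the local fractional maximal operator transfers to $M_{\varphi,\beta,\Omega}$ and $M_{\varphi,\beta-1,\Omega}$ with the appropriate indices.
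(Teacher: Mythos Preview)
Your proposal is correct and follows essentially the same route as the paper's proof: both split the supremum into the ranges $0<r<1$ and $r\ge 1$, invoke the two parts of Theorem~\ref{thm0} for the respective restricted maximal operators, and use Lemma~\ref{lem2.1} together with H\"older's inequality (exploiting $|\Omega|<\infty$) to pass from the exponent $q_{1}$ with $\tfrac{1}{q_{1}}=\tfrac{1}{p}-\tfrac{\beta}{n}$ down to $q$. The only organizational difference is that you first combine the two pointwise gradient estimates via $|\nabla\max\{u,v\}|\le|\nabla u|+|\nabla v|$ and then take $L^q$ norms, whereas the paper bounds the $W^{1,q}$ norms of the two restricted operators separately; the ingredients and the role of the finite-measure hypothesis are identical.
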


\begin{theorem}\label{thm5}
Assume that $n\geq{2}$, $p>{\frac{n}{n-1}}$, $1\leq{\beta}<\min\{{n-\frac{2n}{p(n-1)}},\frac{n-1}{p}\}$, and $\frac{1}{q}=\frac{1}{p}-\frac{\beta-1}{n}$. If $f\in{L^{p}(\Omega)}$ and $|\Omega|<\infty$, then there exists a constant $C>0$ such that
$$
\|{M}_{\varphi,\beta,\Omega}{f}\|_{W^{1,q}(\Omega)}
\leq{C\|f\|_{L^{p}(\Omega)}}.
$$
\end{theorem}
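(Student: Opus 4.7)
The plan is to establish the $L^q(\Omega)$-bound for $M_{\varphi,\beta,\Omega}f$ and for its weak gradient $|\nabla M_{\varphi,\beta,\Omega}f|$ separately, then sum them to obtain the $W^{1,q}(\Omega)$ estimate.

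For the $L^q$-bound of the operator itself, first observe that $\varphi(|B(x,r)|)$ is comparable to $(1+r^n)^\gamma$ and, since $\beta\leq n$, the prefactor $r^\beta(1+r^n)^{\gamma\beta/n}/(\varphi(|B(x,r)|)|B(x,r)|)$ is bounded by a constant multiple of $r^\beta/|B(x,r)|$; hence $M_{\varphi,\beta,\Omega}f\leq C\,M_{\beta,\Omega}f$ pointwise. The classical local fractional-maximal inequality then yields $\|M_{\beta,\Omega}f\|_{L^{q_0}(\Omega)}\leq C\|f\|_{L^p(\Omega)}$ at the critical exponent $1/q_0:=1/p-\beta/n$. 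Since $\beta\geq 1$, the target exponent satisfies $q<q_0$, and Hölder's inequality with $|\Omega|<\infty$ promotes this to
$$\|M_{\varphi,\beta,\Omega}f\|_{L^q(\Omega)}\leq C\,|\Omega|^{1/q-1/q_0}\,\|f\|_{L^p(\Omega)}.$$

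For the weak-gradient bound, I would split the operator according to the scale of the radius: let $M^{<1}_{\varphi,\beta,\Omega}f$ denote the supremum in the definition of $M_{\varphi,\beta,\Omega}f$ restricted to $0<r<\min\{1,\mathrm{dist}(x,\Omega^c)\}$, and let $M^{\geq 1}_{\varphi,\beta,\Omega}f$ denote the corresponding supremum over $1\leq r<\mathrm{dist}(x,\Omega^c)$ (zero if the range is empty), so that $M_{\varphi,\beta,\Omega}f=\max\{M^{<1}_{\varphi,\beta,\Omega}f,\,M^{\geq 1}_{\varphi,\beta,\Omega}f\}$. The identity $\max\{u,v\}=\tfrac12(u+v+|u-v|)$ gives $|\nabla M_{\varphi,\beta,\Omega}f|\leq |\nabla M^{<1}_{\varphi,\beta,\Omega}f|+|\nabla M^{\geq 1}_{\varphi,\beta,\Omega}f|$ almost everywhere. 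The two cases of Theorem \ref{thm2} then dominate the first piece pointwise by $C(M_{\varphi,\beta-1,\Omega}f+S_{\varphi,\beta-1,\Omega}f)$ and the second by $C(M_{\varphi,\beta,\Omega}f+S_{\varphi,\beta,\Omega}f)$. For the first pair, the exponent relation $1/q=1/p-(\beta-1)/n$ is precisely the critical one for fractional order $\beta-1$, so $L^p\to L^q$ boundedness of $M_{\varphi,\beta-1,\Omega}$ and of the spherical operator $S_{\varphi,\beta-1,\Omega}$ applies directly; for the second pair, the $L^p\to L^{q_0}$ estimate is promoted to $L^q$ using $|\Omega|<\infty$ and Hölder exactly as in the first step.

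The main obstacle is the spherical contribution: one needs $L^p\to L^q$ boundedness of $S_{\varphi,\beta-1,\Omega}$ and $S_{\varphi,\beta,\Omega}$ at the relevant exponents, which is exactly what forces the hypotheses $p>n/(n-1)$ and $\beta<\min\{n-2n/(p(n-1)),(n-1)/p\}$ inherited from Stein's spherical maximal theorem and its fractional counterpart. Once this is secured, and after noting that the prefactor of $S_{\varphi,\beta,\Omega}$ inherits the same uniform bound used in the first step, the assembly of the pointwise inequalities of Theorem \ref{thm2} with the critical-exponent maximal bounds and the finite-measure Hölder promotion yields the claimed $W^{1,q}(\Omega)$ estimate.
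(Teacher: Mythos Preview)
Your proposal is correct and follows essentially the same route as the paper: bound $\|M_{\varphi,\beta,\Omega}f\|_{L^q}$ via Lemma~\ref{lem2.1} plus H\"older (since $q<q^\ast$ with $1/q^\ast=1/p-\beta/n$ and $|\Omega|<\infty$), and bound $\|\nabla M_{\varphi,\beta,\Omega}f\|_{L^q}$ via the two pointwise estimates of Theorem~\ref{thm2} combined with Lemmas~\ref{lem2.1}--\ref{lem2.2}. Your explicit decomposition $M_{\varphi,\beta,\Omega}f=\max\{M^{<1}_{\varphi,\beta,\Omega}f,\,M^{\geq 1}_{\varphi,\beta,\Omega}f\}$ together with $|\nabla\max\{u,v\}|\le|\nabla u|+|\nabla v|$ is a welcome clarification of what the paper's informal phrase ``consider two cases $0<r<1$ and $1\le r<\mathrm{dist}(x,\Omega^c)$'' really means; otherwise the ingredients and the chain of inequalities are identical.
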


Now, the boundedness for ${M}_{\varphi,\beta,\Omega}$ on Sobolev space with zero boundary values will be established, which are the applications of Theorem \ref{thm4} and Theorem \ref{thm5}. Recall that Sobolev space with zero boundary values $W_{0}^{1,p}(\Omega)(1\leq{p}<\infty)$ is defined as the completion of $\mathcal{C}_{0}^{\infty}(\Omega)$ with respect to the Sobolev norm. If $f\in{W_{0}^{1,p}(\Omega)}$ with $1<{p}<\infty$, Kinnunen and Lindqvist proved that ${M}_{\Omega}{f}\in{W_{0}^{1,p}(\Omega)}$ in \cite{KL6}. In 2015, Heikkinen et al. \cite{HKK6} showed that if $f\in{L^{p}(\Omega)}$, $p>{\frac{n}{n-1}}$, $n\geq{2}$, $1\leq{\beta}<\frac{n}{p}$, $\frac{1}{q}=\frac{1}{p}-\frac{\beta-1}{n}$, and $|\Omega|<\infty$, then ${M}_{\beta,\Omega}{f}\in{W_{0}^{1,q}(\Omega)}$.

\begin{theorem}\label{thm6}
Let $\frac{1}{q}=\frac{1}{p}-\frac{\beta-1}{n}$ and $|\Omega|<\infty$. Assume that either of the following conditions holds:

(i)~~If $f\in{W^{1,p}(\Omega)}$ with $1<p<n$, $1\leq{\beta}<\frac{n}{p}$, and $0\leq\gamma\leq{n}$.

(ii)~~If $f\in{L^{p}(\Omega)}$, $n\geq{2}$, $p>{\frac{n}{n-1}}$, and $1\leq{\beta}<\min\{{n-\frac{2n}{p(n-1)}},\frac{n-1}{p}\}$.\\
Then, it holds that ${M}_{\varphi,\beta,\Omega}{f}\in{W_{0}^{1,q}(\Omega)}$.
\end{theorem}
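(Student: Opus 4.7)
Theorems~\ref{thm4} and \ref{thm5} already produce $M_{\varphi,\beta,\Omega}f\in W^{1,q}(\Omega)$ under the hypotheses of (i) and (ii), respectively. The remaining task is therefore to improve this to membership in $W_{0}^{1,q}(\Omega)$. The plan is to approximate $M_{\varphi,\beta,\Omega}f$ in $W^{1,q}(\Omega)$ by functions supported strictly inside $\Omega$, and then to invoke the closedness of $W_{0}^{1,q}(\Omega)$ in $W^{1,q}(\Omega)$.

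The key preparatory estimate is a pointwise decay bound
\begin{equation*}
M_{\varphi,\beta,\Omega}f(x)\;\le\;C\,\mathrm{dist}(x,\Omega^{c})\,g(x)\qquad\text{for a.e. }x\in\Omega,
\end{equation*}
for some fixed $g\in L^{q}(\Omega)$. To obtain it, I factor inside the supremum defining $M_{\varphi,\beta,\Omega}f(x)$,
\begin{equation*}
r^{\beta}(1+r^{n})^{\gamma\beta/n}=\bigl[r(1+r^{n})^{\gamma/n}\bigr]\cdot\bigl[r^{\beta-1}(1+r^{n})^{\gamma(\beta-1)/n}\bigr].
\end{equation*}
Since the admissible radii satisfy $r<\mathrm{dist}(x,\Omega^{c})\le\mathrm{diam}\,\Omega<\infty$, the first bracket is at most $C\,\mathrm{dist}(x,\Omega^{c})$, while the second is precisely the integrand of $M_{\varphi,\beta-1,\Omega}f(x)$. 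A direct comparison of $(1+r^{n})^{\gamma(\beta-1)/n}$ with $\varphi(|B(x,r)|)$ shows that the ratio stays uniformly bounded in $r>0$ for the given ranges of $\beta$ and $\gamma$, hence $M_{\varphi,\beta-1,\Omega}f\le C\,M_{\beta-1,\Omega}f$ pointwise. Since $f\in L^{p}(\Omega)$ in both parts of the theorem and $\tfrac{1}{q}=\tfrac{1}{p}-\tfrac{\beta-1}{n}$, the standard $L^{p}\to L^{q}$ boundedness of the local fractional maximal operator places $g:=C\,M_{\beta-1,\Omega}f$ in $L^{q}(\Omega)$.

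Next I take Lipschitz cutoffs $\phi_{k}\colon\Omega\to[0,1]$ with $\phi_{k}=0$ on $\{\mathrm{dist}(\cdot,\Omega^{c})\le 1/k\}$, $\phi_{k}=1$ on $\{\mathrm{dist}(\cdot,\Omega^{c})\ge 2/k\}$, and $|\nabla\phi_{k}|\le 2k$. Each $\phi_{k}M_{\varphi,\beta,\Omega}f$ is supported in the closed set $\{\mathrm{dist}(\cdot,\Omega^{c})\ge 1/k\}\Subset\Omega$, so it lies in $W_{0}^{1,q}(\Omega)$. To pass to the limit I decompose
\begin{equation*}
\nabla\bigl(\phi_{k}M_{\varphi,\beta,\Omega}f-M_{\varphi,\beta,\Omega}f\bigr)=(\phi_{k}-1)\nabla M_{\varphi,\beta,\Omega}f+M_{\varphi,\beta,\Omega}f\,\nabla\phi_{k}.
\end{equation*}
The first summand and the companion piece $(\phi_{k}-1)M_{\varphi,\beta,\Omega}f$ vanish in $L^{q}(\Omega)$ by dominated convergence, because $|\{\mathrm{dist}<2/k\}|\to 0$ and both $M_{\varphi,\beta,\Omega}f$ and its weak gradient belong to $L^{q}$. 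The decisive term is $M_{\varphi,\beta,\Omega}f\,\nabla\phi_{k}$, supported in $\{1/k<\mathrm{dist}<2/k\}$; on this band the decay bound supplies $|M_{\varphi,\beta,\Omega}f|\le(2C/k)\,g$, whence
\begin{equation*}
\|M_{\varphi,\beta,\Omega}f\,\nabla\phi_{k}\|_{L^{q}(\Omega)}\le 2k\cdot\frac{2C}{k}\,\|g\|_{L^{q}(\{\mathrm{dist}<2/k\})}=4C\,\|g\|_{L^{q}(\{\mathrm{dist}<2/k\})}\longrightarrow 0.
\end{equation*}

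This last estimate is the main obstacle and the place where the standing assumption $\beta\ge 1$ becomes essential: only the full factor $\mathrm{dist}(x,\Omega^{c})$ extracted in the decay bound is strong enough to absorb the $k$ arising from $|\nabla\phi_{k}|$. Producing that factor demands both the algebraic split of $r^{\beta}(1+r^{n})^{\gamma\beta/n}$ and the uniform $r$--control of the $\varphi$--weight, and then bootstrapping $g$ into $L^{q}$ via the $L^{p}\to L^{q}$ mapping property of the lower order operator $M_{\beta-1,\Omega}$. With $\phi_{k}M_{\varphi,\beta,\Omega}f\to M_{\varphi,\beta,\Omega}f$ established in $W^{1,q}(\Omega)$, the closedness of $W_{0}^{1,q}(\Omega)$ in $W^{1,q}(\Omega)$ yields the conclusion in both (i) and (ii).
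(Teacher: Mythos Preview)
Your argument is correct and complete, with one cosmetic slip: $|\Omega|<\infty$ does \emph{not} force $\mathrm{diam}\,\Omega<\infty$, so the justification ``$r<\mathrm{dist}(x,\Omega^{c})\le\mathrm{diam}\,\Omega<\infty$'' is not quite right. What you actually need---and what does hold---is $\sup_{x\in\Omega}\mathrm{dist}(x,\Omega^{c})\le(|\Omega|/\varpi_{n})^{1/n}<\infty$, since $B(x,\mathrm{dist}(x,\Omega^{c}))\subset\Omega$. This same volume bound also confirms that the level sets $\{\mathrm{dist}(\cdot,\Omega^{c})\ge 1/k\}$ are compact, justifying $\phi_{k}M_{\varphi,\beta,\Omega}f\in W_{0}^{1,q}(\Omega)$. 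With that fix, everything goes through.

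The paper proceeds differently. It establishes essentially the same decay bound $M_{\varphi,\beta,\Omega}f(x)\le C\,\mathrm{dist}(x,\Omega^{c})\,g(x)$ with $g\in L^{q}(\Omega)$ (splitting into the ranges $r<1$ and $r\ge 1$ to handle the $(1+r^{n})^{\gamma/n}$ factor), but instead of running a cutoff approximation it simply notes that this gives
\[
\int_{\Omega}\Bigl(\frac{M_{\varphi,\beta,\Omega}f(x)}{\mathrm{dist}(x,\Omega^{c})}\Bigr)^{q}\mathrm{d}x<\infty,
\]
and then invokes the Hardy-type criterion of Kinnunen--Martio (Lemma~\ref{lem6.1}): any $u\in W^{1,q}(\Omega)$ with $u/\mathrm{dist}(\cdot,\Omega^{c})\in L^{q}(\Omega)$ lies in $W_{0}^{1,q}(\Omega)$. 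Your cutoff argument is, in effect, a hands-on proof of that lemma in this particular situation. The paper's route is shorter because it outsources the approximation step; yours is more self-contained and makes transparent exactly where the full power of $\beta\ge 1$ (the extractable factor $\mathrm{dist}(x,\Omega^{c})$) is spent, namely in cancelling the $k$ from $|\nabla\phi_{k}|$.
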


Finally, we give the regularity properties for $M_{\varphi,\Omega}$.

\begin{corollary}\label{coro7}
Let $f\in{W^{1,p}(\Omega)}$ and $1<p<\infty$. Then

(i)~~${M}_{\varphi,\Omega}{f}\in{W^{1,p}(\Omega)}$. Moreover, for almost every $x\in{\Omega}$, there holds
$$
|\nabla{{M}_{\varphi,\Omega}{f}}(x)|
\leq{\gamma{n}{M}_{\varphi,\Omega}{f}(x)
+2{M}_{\varphi,\Omega}{|\nabla{f}|}(x)}.
$$

(ii)~~The map ${M}_{\varphi,\Omega}: W^{1,p}(\Omega)\rightarrow{W^{1,p}(\Omega)}$ is bounded. That is, there exists a positive constant $C$ such that
$$
\|{M}_{\varphi,\Omega}{f}\|_{W^{1,p}(\Omega)}
\leq{C\|f\|_{W^{1,p}(\Omega)}}.
$$

(iii)~~If $f\in{W_{0}^{1,p}(\Omega)}$, then ${M}_{\varphi,\Omega}{f}\in{W_{0}^{1,p}(\Omega)}$.
\end{corollary}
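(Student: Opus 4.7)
The plan is to deduce Corollary \ref{coro7} by specializing Theorems \ref{thm0}, \ref{thm3}, and \ref{thm6} to $\beta = 0$. When $\beta = 0$, the factor $r^{\beta}(1+r^{n})^{\gamma\beta/n}$ in the definition of $M_{\varphi,\beta,\Omega}$ collapses to $1$; consequently the case split between $0 < r < 1$ and $1 \leq r < \mathrm{dist}(x,\Omega^{c})$ disappears, the auxiliary operator $S_{\varphi,\beta-1,\Omega}$ drops out, and the fractional Sobolev restrictions $1 < p < n$ and $\beta < n/p$ become vacuous. This is why the corollary holds for the full range $1 < p < \infty$ without requiring a $p$-Sobolev embedding or $|\Omega| < \infty$.

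For part (i), I would follow the strategy of Theorem \ref{thm0}(ii) with $\beta = 0$. Define, for each admissible $r > 0$,
\[
u_{r}(x) = \frac{1}{\varphi(|B(x,r)|)\,|B(x,r)|} \int_{B(x,r)} |f(y)|\,dy
\]
on the set $\{r < \mathrm{dist}(x,\Omega^{c})\}$. Since $|B(x,r)| = \varpi_{n}r^{n}$ is $x$-independent, a change of variables together with $f \in W^{1,p}(\Omega)$ yields $|\nabla_{x} u_{r}(x)| \leq M_{\varphi,\Omega}|\nabla f|(x)$. Writing $M_{\varphi,\Omega}f$ as the supremum of $u_{r}$ over a countable dense set of radii, the standard lemma on weak gradients of Sobolev suprema produces the factor $2$ in front of $M_{\varphi,\Omega}|\nabla f|$; the extra term $\gamma n \cdot M_{\varphi,\Omega}f(x)$ arises from controlling the radial derivative of the weight $\varphi(|B(x,r)|) = (1+\varpi_{n}r^{n})^{\gamma}$ when the supremum is attained or approached at $r = \mathrm{dist}(x,\Omega^{c})$, by precisely the same mechanism that generates the $(\gamma n + \beta)$-coefficient in Theorem \ref{thm0}(ii).

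Part (ii) then follows by integrating the pointwise estimate of (i) in $L^{p}$ and using the strong $L^{p}$-boundedness of $M_{\varphi,\Omega}$. Since $\varphi \geq 1$, the pointwise inequality $M_{\varphi,\Omega}g \leq Mg$ combined with the Hardy--Littlewood maximal theorem gives $\|M_{\varphi,\Omega}g\|_{L^{p}(\Omega)} \leq C\|g\|_{L^{p}(\Omega)}$ for every $g \in L^{p}(\Omega)$ and $1 < p < \infty$. Applied to $g = f$ and $g = |\nabla f|$, this bounds $\|\nabla M_{\varphi,\Omega}f\|_{L^{p}(\Omega)}$ by $C\|f\|_{W^{1,p}(\Omega)}$, which together with $\|M_{\varphi,\Omega}f\|_{L^{p}(\Omega)} \leq C\|f\|_{L^{p}(\Omega)}$ yields the Sobolev-norm estimate.

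For part (iii), I would pick $f_{k} \in \mathcal{C}_{0}^{\infty}(\Omega)$ with $f_{k} \to f$ in $W^{1,p}(\Omega)$. The compact support of each $f_{k}$ makes $M_{\varphi,\Omega}f_{k}$ vanish wherever $\mathrm{dist}(x,\mathrm{supp}\,f_{k}) > \mathrm{dist}(x,\Omega^{c})$, in particular in a neighborhood of $\partial\Omega$, so $M_{\varphi,\Omega}f_{k} \in W_{0}^{1,p}(\Omega)$ by the bound in (ii). The sublinear inequality $|M_{\varphi,\Omega}f_{k} - M_{\varphi,\Omega}f| \leq M_{\varphi,\Omega}(f_{k}-f)$ and the $L^{p}$-continuity of $M_{\varphi,\Omega}$ give $L^{p}$-convergence $M_{\varphi,\Omega}f_{k} \to M_{\varphi,\Omega}f$, while the uniform $W^{1,p}$-bound from (ii) provides weak $W^{1,p}$-convergence; since $W_{0}^{1,p}(\Omega)$ is weakly closed in $W^{1,p}(\Omega)$, we conclude $M_{\varphi,\Omega}f \in W_{0}^{1,p}(\Omega)$. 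The main obstacle is part (i): extracting the precise constant $\gamma n$ requires careful tracking of the radial derivative of the weight $\varphi(|B(x,r)|)$ at the boundary radius $r = \mathrm{dist}(x,\Omega^{c})$; this is the ingredient absent in Kinnunen's classical treatment of $M_{\Omega}$ ($\gamma = 0$), and one must verify that it passes cleanly through the $\beta = 0$ specialization of the Theorem \ref{thm0}(ii) bookkeeping.
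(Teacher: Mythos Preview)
Your arguments for (ii) and (iii) match the paper's: integrate the pointwise bound from (i) using the $L^{p}$-boundedness of $M_{\varphi,\Omega}$ (immediate from $M_{\varphi,\Omega}\le M_{\Omega}$), and for (iii) approximate by $f_{k}\in\mathcal{C}_{0}^{\infty}(\Omega)$, observe that $M_{\varphi,\Omega}f_{k}$ is compactly supported in $\Omega$, and conclude by $L^{p}$-convergence together with weak compactness in $W^{1,p}$.

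The gap is in part (i). Your functions $u_{r}$ are averages at a \emph{fixed} radius $r$ and are therefore only defined on the subset $\Omega_{r}=\{x:\mathrm{dist}(x,\Omega^{c})>r\}$, which changes with $r$. The supremum lemma for weak gradients requires the competing functions to live on a common domain, so it does not apply to the family $\{u_{r}\}$; this is precisely the obstruction that forced Kinnunen--Lindqvist, and the present paper, to work instead with the averages
\[
K_{\varphi}^{l}f(x)=\frac{1}{\varphi(|B(x,l\sigma(x))|)\,|B(x,l\sigma(x))|}\int_{B(x,l\sigma(x))}f(y)\,\mathrm{d}y,
\qquad \sigma(x)=\mathrm{dist}(x,\Omega^{c}),\ 0<l<1,
\]
each of which is defined on all of $\Omega$, with $M_{\varphi,\Omega}f=\sup_{k}K_{\varphi}^{l_{k}}f$ over rational $l_{k}\in(0,1)$. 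Your bookkeeping for the constants is also off: for fixed $r$ the weight $\varphi(|B(x,r)|)$ is constant in $x$, so $\nabla u_{r}$ carries \emph{no} $\gamma n$ contribution, and the supremum lemma itself never produces a factor $2$. In the paper's computation both constants come from $\nabla_{x}K_{\varphi}^{l}f(x)$: the $\gamma n$ term is the $x$-derivative of $\varphi(|B(x,l\sigma(x))|)$ through $\sigma(x)$, and the factor $2$ appears because there are two separate gradient contributions --- one from $\int_{B}\nabla f$ and one from the moving-boundary term, which after Green's first identity is controlled by a second copy of $M_{\varphi,\Omega}|\nabla f|(x)$.
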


The article is organized as follows: Section 2 is devoted to giving some major lemmas. The proofs of Theorems \ref{thm0} and \ref{thm2} will be given by using key lemmas in Section 3. In Section 4, we show the proofs of Theorems \ref{thm3}-\ref{thm5}. Combining Theorem \ref{thm4} and Theorem \ref{thm5}, the proof of Theorem \ref{thm6} shall be given in Section 5. In Section 6, we will give the proof of Corollary \ref{coro7} by establishing some lemmas for ${M}_{\varphi,\Omega}$.

\section{Some preliminaries}

\noindent

We need the following lemmas, which are vital to the proofs of our theorems.

\begin{lemma}\label{lem2.1}
Let $p>{1}$, $0<{\beta}<{\frac{n}{p}}$, and $\frac{1}{q}=\frac{1}{p}-\frac{\beta}{n}$. Then for any $f\in{L^{p}(\Omega)}$, there exists a positive constant $C$ such that
$$
\|{M}_{\varphi,\beta,\Omega}{f}\|_{L^{q}(\Omega)}\leq{C\|{f}\|_{L^{p}(\Omega)}}.
$$
\end{lemma}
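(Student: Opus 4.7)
The plan is to reduce this to the classical Hardy--Littlewood--Sobolev--Muckenhoupt--Wheeden boundedness of the usual fractional maximal operator $M_{\beta}$ via a simple pointwise domination $M_{\varphi,\beta,\Omega}f(x)\leq C\,M_{\beta}\tilde f(x)$, where $\tilde f$ is the extension of $f$ by zero to $\mathbb{R}^{n}$. The whole content is to absorb the weights $(1+r^{n})^{\gamma\beta/n}$ and $\varphi(|B(x,r)|)^{-1}$ into an $n,\gamma$-dependent constant.

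First I would note that $|B(x,r)|=\varpi_{n}r^{n}$, so $\varphi(|B(x,r)|)=(1+\varpi_{n}r^{n})^{\gamma}$. Since $\varpi_{n}$ depends only on $n$, the quantities $1+r^{n}$ and $1+\varpi_{n}r^{n}$ are comparable uniformly in $r$, so there is a constant $c_{n}>0$ with
$$
\varphi(|B(x,r)|)\geq c_{n}^{\gamma}\,(1+r^{n})^{\gamma}.
$$
Using that $\beta<n/p<n$ together with $\gamma\geq 0$, we have $\gamma(\beta/n-1)\leq 0$, hence
$$
\frac{(1+r^{n})^{\gamma\beta/n}}{\varphi(|B(x,r)|)}\;\leq\;c_{n}^{-\gamma}\,(1+r^{n})^{\gamma(\beta/n-1)}\;\leq\;c_{n}^{-\gamma}.
$$
Multiplying this by $r^{\beta}/|B(x,r)|$ and taking the supremum over admissible radii $0<r<\mathrm{dist}(x,\Omega^{c})$, I get
$$
M_{\varphi,\beta,\Omega}f(x)\;\leq\;C_{n,\gamma}\,M_{\beta,\Omega}f(x)\;\leq\;C_{n,\gamma}\,M_{\beta}\tilde f(x),
$$
the last step because the localised supremum is dominated by the full-space supremum of the zero-extension.

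Finally I would invoke the classical Muckenhoupt--Wheeden theorem: for $1<p$, $0<\beta<n/p$ and $1/q=1/p-\beta/n$, $M_{\beta}$ maps $L^{p}(\mathbb{R}^{n})$ to $L^{q}(\mathbb{R}^{n})$ boundedly. Combined with $\|\tilde f\|_{L^{p}(\mathbb{R}^{n})}=\|f\|_{L^{p}(\Omega)}$, this yields
$$
\|M_{\varphi,\beta,\Omega}f\|_{L^{q}(\Omega)}\leq C_{n,\gamma}\|M_{\beta}\tilde f\|_{L^{q}(\mathbb{R}^{n})}\leq C\|f\|_{L^{p}(\Omega)},
$$
as claimed.

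There is essentially no obstacle: the only thing to be careful about is that the exponent $\gamma(\beta/n-1)$ on $1+r^{n}$ is indeed non-positive, which is where the hypothesis $\beta<n/p$ (giving $\beta<n$) is used. Everything else is the classical $L^{p}$--$L^{q}$ boundedness of the fractional maximal operator.
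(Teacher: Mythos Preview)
Your proof is correct and follows essentially the same approach as the paper: pointwise domination $M_{\varphi,\beta,\Omega}f(x)\le C\,M_{\beta,\Omega}f(x)$ (this is exactly what the paper asserts), followed by the known $L^{p}\to L^{q}$ bound for the fractional maximal operator. The only cosmetic difference is that you pass to the global $M_{\beta}$ via zero extension, whereas the paper cites the local bound from \cite{HKK6} directly.
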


\begin{proof}
Applying \cite[Theorem 2.1]{HKK6} and the fact that ${M}_{\varphi,\beta,\Omega}{f}(x)$ can be controlled by ${M}_{\beta,\Omega}{f}(x)$ for each $x\in\Omega$, then Lemma \ref{lem2.1} is not difficult to obtain. The details are omitted here.
\end{proof}

Also, we can get the following lemma by using \cite[Theorem 2.2]{HKK6}.

\begin{lemma}\label{lem2.2}
Let $n\geq{2}$, $p>{\frac{n}{n-1}}$, $0\leq{\beta}<\min\{{n-\frac{2n}{p(n-1)}},\frac{n-1}{p}\}$, and $\frac{1}{q}=\frac{1}{p}-\frac{\beta}{n}$. Then for any $f\in{L^{p}(\Omega)}$, there is a constant $C>{0}$ such that
$$
\|{S}_{\varphi,\beta,\Omega}{f}\|_{L^{q}(\Omega)}\leq{C\|{f}\|_{L^{p}(\Omega)}}.
$$
\end{lemma}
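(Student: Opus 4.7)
The plan is to mirror the proof of Lemma \ref{lem2.1}: I will establish a pointwise domination $S_{\varphi,\beta,\Omega}f(x) \leq C\, S_{\beta,\Omega}f(x)$ at every $x \in \Omega$, with $C=C(\gamma,n,\beta)$, and then invoke \cite[Theorem 2.2]{HKK6}, which provides the $L^p(\Omega)\to L^q(\Omega)$ boundedness of the unweighted local spherical fractional maximal operator $S_{\beta,\Omega}$ under exactly the stated hypotheses on $n$, $p$, $\beta$ and $q$.

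The integrands defining $S_{\varphi,\beta,\Omega}$ and $S_{\beta,\Omega}$ differ only by the multiplicative factor
$$
h(r) := \frac{(1+r^{n})^{\gamma\beta/n}}{\varphi(|\partial B(x,r)|)} = \frac{(1+r^{n})^{\gamma\beta/n}}{(1+n\varpi_{n}r^{n-1})^{\gamma}},
$$
which depends on $r$ but not on $x$. So it is enough to show that $h(r)\leq C(\gamma,n,\beta)$ for all $r>0$; taking the supremum over admissible $r$ then yields the pointwise inequality. For $r\in(0,1]$ the numerator is at most $2^{\gamma\beta/n}$ while the denominator is at least $1$, giving $h(r)\leq 2^{\gamma\beta/n}$. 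For $r\geq 1$ I would use $(1+r^{n})^{\gamma\beta/n}\leq 2^{\gamma\beta/n}r^{\gamma\beta}$ together with $(1+n\varpi_{n}r^{n-1})^{\gamma}\geq (n\varpi_{n})^{\gamma}r^{\gamma(n-1)}$, which yields $h(r)\leq C_{\gamma,n}\, r^{\gamma(\beta-n+1)}$.

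The main (mild) obstacle is keeping $h$ bounded as $r\to\infty$, which is relevant when $\Omega$ has infinite measure so that no a priori bound on $r$ is available. This is ensured by the hypothesis: since $\beta<(n-1)/p\leq n-1$ we have $\beta-n+1<0$, and hence $r^{\gamma(\beta-n+1)}$ is bounded on $[1,\infty)$. The degenerate cases $\gamma=0$ or $\beta=0$ trivially give $h\leq 1$. Combining the two ranges yields the uniform bound on $h$, hence the pointwise domination $S_{\varphi,\beta,\Omega}f\leq C\, S_{\beta,\Omega}f$, and Lemma \ref{lem2.2} then follows directly from \cite[Theorem 2.2]{HKK6}.
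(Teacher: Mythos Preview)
Your proposal is correct and follows exactly the approach the paper indicates: the paper simply asserts that Lemma~\ref{lem2.2} ``can be obtained by using \cite[Theorem 2.2]{HKK6}'', relying (by analogy with the proof of Lemma~\ref{lem2.1}) on the pointwise domination of $S_{\varphi,\beta,\Omega}$ by $S_{\beta,\Omega}$, which you have spelled out in full detail. Your verification that the factor $h(r)$ is uniformly bounded---in particular the observation that $\beta<(n-1)/p<n-1$ forces $r^{\gamma(\beta-n+1)}$ to be bounded for $r\geq 1$---supplies the only nontrivial point the paper leaves implicit.
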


\begin{lemma}\label{lem2.3}{\textsuperscript{\cite{HKK6, KL6}}}
Let $1\leq{p}\leq{\infty}$. If for $k=1,2,\ldots$, $g_{k}\leq{f_{k}}$ almost everywhere on $\Omega$ and $g_{k}\rightarrow{g}$, $f_{k}\rightarrow{f}$ weakly on $L^{p}(\Omega)$ as $k\rightarrow{\infty}$, then $g\leq{f}$ almost everywhere on $\Omega$.
\end{lemma}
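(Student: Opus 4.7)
The plan is to reduce the lemma to the statement that a weak (or weak-$*$) limit in $L^p(\Omega)$ of a sequence of a.e. non-negative functions is itself non-negative a.e. Setting $h_k := f_k - g_k$, the hypothesis $g_k \le f_k$ a.e. gives $h_k \ge 0$ a.e., while linearity of weak convergence yields that $h_k$ converges weakly to $h := f - g$ in $L^p(\Omega)$; thus it suffices to prove $h \ge 0$ a.e. on $\Omega$.

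First, I would test against indicator functions of finite-measure sets. For any measurable $E \subset \Omega$ with $|E| < \infty$, one has $\chi_E \in L^{p'}(\Omega)$, where $p'$ is the H\"older conjugate of $p$ (with $p' = 1$ when $p = \infty$, in which case weak convergence is understood as weak-$*$ convergence, as in \cite{HKK6, KL6}). From $h_k \ge 0$ one obtains $\int_E h_k \, \mathrm{d}x \ge 0$, and passing to the limit gives
$$\int_E h \, \mathrm{d}x \;=\; \lim_{k \to \infty} \int_E h_k \, \mathrm{d}x \;\ge\; 0.$$
Since $\Omega \subset \mathbb{R}^n$ is $\sigma$-finite, applying this with $E := \{x \in \Omega : h(x) < 0\} \cap B(0, R)$ forces $|E| = 0$ for every $R > 0$, so $h \ge 0$ a.e. on $\Omega$, which is the conclusion $g \le f$ a.e.

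An alternative route for $1 \le p < \infty$ goes via Mazur's lemma: produce convex combinations $\widetilde{h}_m = \sum_{k \ge m} \lambda_{k,m} h_k$ with $\lambda_{k,m} \ge 0$ and $\sum_k \lambda_{k,m} = 1$ that converge to $h$ in the $L^p$-norm, then extract an a.e. convergent subsequence; non-negativity of every $\widetilde{h}_m$ transfers to $h$. The only real obstacle is the endpoint $p = \infty$, where Mazur's lemma is not directly available and weak convergence must be interpreted as weak-$*$ convergence; the indicator-function argument above handles all $1 \le p \le \infty$ uniformly, and is the route I would take in a clean write-up.
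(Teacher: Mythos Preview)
Your argument is correct: testing the weak limit of $h_k=f_k-g_k\ge 0$ against $\chi_E$ for $E$ of finite measure and using $\sigma$-finiteness is exactly the standard route, and your handling of the endpoint $p=\infty$ via weak-$*$ convergence is appropriate. Note, however, that the paper does not supply its own proof of this lemma; it is simply quoted from \cite{HKK6, KL6}, so there is no in-paper argument to compare against. Your write-up is the expected proof one would find (or reconstruct) from those references, and the Mazur alternative you mention is also standard; either would serve.
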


\section{Proofs of Theorems \ref{thm0} and \ref{thm2}}

\noindent

Let $0<{l}<1$ and $0\leq{\beta}<{n}$. We define the fractional new average operator by
$$
K_{\varphi,\beta}^{l}f(x)=\frac{(l\sigma(x))^{\beta}(1+(l\sigma(x))^{n})^{\frac{\gamma\beta}{n}}}
{\varphi(|B(x,l\sigma(x))|)|B(x,l\sigma(x))|}\int_{B(x,l\sigma(x))}f(y)\mathrm{d}y,
$$
where $\sigma(x)=\mathrm{dist}(x,\Omega^{c})$ is a Lipschitz function. It follows from Rademacher's theorem that the function $\sigma(x)$ is differentiable almost everywhere on $\Omega$. Moreover, $|\nabla\sigma(x)|=1$ for almost every $x\in{\Omega}$. The following estimates for the weak gradient of $K_{\varphi,\beta}^{l}$ will be used.

\begin{lemma}\label{lem3.1}
Suppose that $f\in{W^{1,p}(\Omega)}$ with $1<p<{n}$, $0<l<1$, $\frac{1}{q}=\frac{1}{p}-\frac{\beta}{n}$, and $1\leq{\beta}<\frac{n}{p}$. Then

(i)~~If $0<l\sigma(x)<1$, $0\leq\gamma\leq{n}$, and one of the following conditions holds:

~~~(a)~~$\Omega$ admits a p-Sobolev embedding;

~~~(b)~~$|\Omega|<\infty$ and $\frac{1}{q}=\frac{1}{p}-\frac{\beta-1}{n}$.\\
Then $|\nabla{K_{\varphi,\beta}^{l}f}|\in{L^{q}(\Omega)}$. Moreover, for almost every $x\in{\Omega}$, there holds
$$
|\nabla{K_{\varphi,\beta}^{l}f}(x)|
\leq{(\gamma{n}+2\beta){M}_{\varphi,\beta-1,\Omega}{f}(x)+2{M}_{\varphi,\beta,\Omega}{|\nabla{f}|}(x)}.
$$

(ii)~~If $l\sigma(x)\geq1$, then $|\nabla{K_{\varphi,\beta}^{l}f}|\in{L^{q}(\Omega)}$. Moreover, for almost every $x\in{\Omega}$, we have
$$
|\nabla{K_{\varphi,\beta}^{l}f}(x)|
\leq{(\gamma{n}+\beta){M}_{\varphi,\beta,\Omega}{f}(x)+2{M}_{\varphi,\beta,\Omega}{|\nabla{f}|}(x)}.
$$
\end{lemma}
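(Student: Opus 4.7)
The plan is to first remove the $x$-dependence from the domain of integration by the change of variables $y = x + l\sigma(x)z$, rewriting
\[
K_{\varphi,\beta}^{l}f(x) = \Psi(l\sigma(x))\int_{B(0,1)} f(x+l\sigma(x)z)\,dz,
\qquad \Psi(r)=\frac{r^{\beta}(1+r^{n})^{\gamma\beta/n}}{\varpi_{n}(1+\varpi_{n}r^{n})^{\gamma}}.
\]
This is the standard trick (used in \cite{KL6, HKK6}) that lets the weak gradient be computed by differentiating under the integral sign.

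Next I will apply the chain and product rules to $\partial_{i}K_{\varphi,\beta}^{l}f(x)$, which produces three pieces:
\begin{align*}
\mathrm{(A)}\;&\; l\,\Psi'(l\sigma(x))\,\partial_{i}\sigma(x)\int_{B(0,1)} f(x+l\sigma(x)z)\,dz,\\
\mathrm{(B)}\;&\; \Psi(l\sigma(x))\int_{B(0,1)} \partial_{i}f(x+l\sigma(x)z)\,dz,\\
\mathrm{(C)}\;&\; l\,\Psi(l\sigma(x))\,\partial_{i}\sigma(x)\int_{B(0,1)} z\cdot\nabla f(x+l\sigma(x)z)\,dz.
\end{align*}
Change variables back in (B) and (C), use $|\nabla\sigma|=1$ a.e., and reduce (C) by the divergence identity $\int_{B(x,r)}(y-x)\cdot\nabla f\,dy = r\int_{\partial B(x,r)} f\,d\mathcal{H}^{n-1} - n\int_{B(x,r)} f\,dy$, so that (C) splits into a spherical-average term plus a correction to (A). After these reductions, (B) is controlled pointwise by $M_{\varphi,\beta,\Omega}|\nabla f|(x)$, while (A) and the (C)-residue are controlled by the elementary ratios
\[
\frac{r\Psi'(r)}{\Psi(r)} = \beta + \frac{\gamma\beta\,r^{n}}{1+r^{n}} - \frac{\gamma n\,\varpi_{n}r^{n}}{1+\varpi_{n}r^{n}},
\qquad
\frac{(1+r^{n})^{\gamma\beta/n}}{(1+\varpi_{n}r^{n})^{\gamma}},
\qquad r=l\sigma(x).
\]

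The dichotomy in the statement is exactly driven by these two quantities. For $r<1$, the second ratio is bounded above by $2$ under the assumption $0\le\gamma\le n$, and the $\beta/r$ term in $\Psi'/\Psi$ turns into one extra factor $r^{-1}$ that converts the weight $r^{\beta}$ inside $\Psi$ into $r^{\beta-1}$, yielding a clean pointwise bound by $M_{\varphi,\beta-1,\Omega}f(x)$; collecting all the numerical constants from (A), the (C)-residue, and the $\gamma$-pieces produces exactly the coefficient $\gamma n + 2\beta$. For $r\ge 1$, the two $\gamma$-fractions each behave like $O(1)$ rather than vanishing, but the same algebra now keeps $r^{\beta}$ intact, so the natural comparison is with $M_{\varphi,\beta,\Omega}f(x)$ and the constant reduces to $\gamma n+\beta$. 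Finally, the $L^{q}$ integrability of $|\nabla K_{\varphi,\beta}^{l}f|$ follows from Lemma~\ref{lem2.1}: in case (ii) it applies to $M_{\varphi,\beta,\Omega}$ directly; in case (i)(b) the shifted relation $\tfrac{1}{q}=\tfrac{1}{p}-\tfrac{\beta-1}{n}$ and $|\Omega|<\infty$ make Lemma~\ref{lem2.1} applicable to $M_{\varphi,\beta-1,\Omega}$; in case (i)(a) the $p$-Sobolev embedding gives $f\in L^{np/(n-p)}(\Omega)$, and Lemma~\ref{lem2.1} applied to $M_{\varphi,\beta-1,\Omega}$ at the exponent $np/(n-p)$ lands in $L^{q}(\Omega)$ with $\tfrac{1}{q}=\tfrac{1}{p}-\tfrac{\beta}{n}$.

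The main obstacle will be extracting the precise coefficients $\gamma n+2\beta$ and $\gamma n+\beta$: this requires bounding the two $\gamma$-terms in $r\Psi'/\Psi$ against $\beta$ in each regime, and identifying the (C)-residue in (A) to avoid double-counting. Once this bookkeeping is done, the remainder of the argument is a sequence of direct majorizations by the relevant new maximal operators.
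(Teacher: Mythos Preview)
Your change-of-variables set-up is sound and essentially equivalent to the paper's direct differentiation; the pieces (A), (B), (C) you isolate correspond precisely to the terms the paper obtains in its equations (3.1)--(3.2). The gap is in your handling of (C).

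After changing back to $y$-variables, (C) is
\[
l\,\Psi(r)\,\partial_i\sigma(x)\,\frac{1}{r^{n+1}}\int_{B(x,r)}(y-x)\cdot\nabla f(y)\,\mathrm{d}y,\qquad r=l\sigma(x),
\]
and this is \emph{already} in the form you want: since $|y-x|\le r$ on $B(x,r)$ and $0<l<1$, one gets directly $|(\mathrm{C})|\le l\,K_{\varphi,\beta}^{l}|\nabla f|(x)\le M_{\varphi,\beta,\Omega}|\nabla f|(x)$. Combined with the identical bound on (B), this produces the factor $2M_{\varphi,\beta,\Omega}|\nabla f|(x)$ in the statement. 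Your proposed use of the divergence identity goes the wrong way: it converts (C) into a spherical average $\tfrac{1}{|\partial B|}\int_{\partial B(x,r)}f$ (plus a ball term), and a spherical average of $f$ is \emph{not} dominated by $M_{\varphi,\beta-1,\Omega}f$ or $M_{\varphi,\beta,\Omega}f$; controlling it would force the spherical maximal operator $S_{\varphi,\beta,\Omega}$ into the estimate, which is absent from the claimed inequality. The paper actually uses Green's identity in the \emph{opposite} direction---its direct differentiation produces a spherical term (from the moving boundary), which it then converts into $\int_B(y-x)\cdot\nabla f$ precisely to avoid the spherical operator. Your change of variables has already performed this conversion implicitly, so no identity is needed.

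With (C) bounded directly, term (A) alone must carry the coefficient on $M_{\varphi,\beta-1,\Omega}f$ (resp.\ $M_{\varphi,\beta,\Omega}f$); your formula for $r\Psi'(r)/\Psi(r)$ does this, with the split $r<1$ versus $r\ge 1$ determining whether the $\beta/r$ piece forces the drop to $\beta-1$. Finally, you should make explicit the density step the paper carries out: the pointwise inequality is first proved for $f\in W^{1,p}(\Omega)\cap\mathcal{C}^{\infty}(\Omega)$, and then transferred to general $f\in W^{1,p}(\Omega)$ via approximation, weak-$L^{q}$ compactness of $\{|\nabla K_{\varphi,\beta}^{l}\psi_j|\}$, and Lemma~\ref{lem2.3}. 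Differentiating under the integral sign is not immediately justified for a merely weakly differentiable $f$.
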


\begin{proof}
We first suppose that $f\in{W^{1,p}(\Omega)\cap{\mathcal{C}^{\infty}(\Omega)}}$. Then for almost every $x\in{\Omega}$ and $1\leq{i}\leq{n}$, from the Leibnitz rule there holds
\begin{eqnarray*}
\begin{split}
D_{i}K_{\varphi,\beta}^{l}f(x)
&=D_{i}\left(\frac{(l\sigma(x))^{\beta}(1+(l\sigma(x))^{n})^{\frac{\gamma\beta}{n}}}
{(1+\varpi_{n}(l\sigma(x))^{n})^{\gamma}\varpi_{n}(l\sigma(x))^{n}}\right)\int_{B(x,l\sigma(x))}f(y)\mathrm{d}y\\
&~~~+\frac{(l\sigma(x))^{\beta}(1+(l\sigma(x))^{n})^{\frac{\gamma\beta}{n}}}
{(1+\varpi_{n}(l\sigma(x))^{n})^{\gamma}\varpi_{n}(l\sigma(x))^{n}}D_{i}\left(\int_{B(x,l\sigma(x))}f(y)\mathrm{d}y\right).
\end{split}
\end{eqnarray*}
Since by the chain rule, it yields that
$$
D_{i}\left(\int_{B(x,l\sigma(x))}f(y)\mathrm{d}y\right)
=lD_{i}\sigma(x)\int_{\partial{B(x,l\sigma(x))}}f(y)\mathrm{d}\mathcal{H}^{n-1}(y)+\int_{B(x,l\sigma(x))}D_{i}f(y)\mathrm{d}y,
$$
here we have used the fact that $\frac{\partial}{\partial(l\sigma(x))}\int_{B(x,l\sigma(x))}f(y)\mathrm{d}y
=\int_{\partial{B(x,l\sigma(x))}}f(y)\mathrm{d}\mathcal{H}^{n-1}(y)$.
Then
\begin{align}\label{301}
\nabla{K_{\varphi,\beta}^{l}f(x)}
&=n\left(\frac{\gamma\beta}{n}-\gamma\right)(1+(l\sigma(x))^{n})^{\frac{\gamma\beta}{n}-\gamma-1}l^{n}\sigma(x)^{n-1}\nabla{\sigma(x)}
(l\sigma(x))^{\beta-n}\int_{B(x,l\sigma(x))}f(y)\mathrm{d}y\notag\\
&~~~+(\beta-n)(1+(l\sigma(x))^{n})^{\frac{\gamma\beta}{n}-\gamma}l^{\beta-n}\sigma(x)^{\beta-n-1}\nabla{\sigma(x)}
\int_{B(x,l\sigma(x))}f(y)\mathrm{d}y\notag\\
&~~~+\frac{(l\sigma(x))^{\beta}(1+(l\sigma(x))^{n})^{\frac{\gamma\beta}{n}}}
{(1+\varpi_{n}(l\sigma(x))^{n})^{\gamma}\varpi_{n}(l\sigma(x))^{n}}
l\nabla\sigma(x)\int_{\partial{B(x,l\sigma(x))}}f(y)\mathrm{d}\mathcal{H}^{n-1}(y)\notag\\
&~~~+\frac{(l\sigma(x))^{\beta}(1+(l\sigma(x))^{n})^{\frac{\gamma\beta}{n}}}
{(1+\varpi_{n}(l\sigma(x))^{n})^{\gamma}\varpi_{n}(l\sigma(x))^{n}}\int_{B(x,l\sigma(x))}\nabla{f(y)}\mathrm{d}y.
\end{align}
Thus,
\begin{align}\label{3001}
&~~~~\nabla{K_{\varphi,\beta}^{l}f(x)}\notag\\
&=\frac{\gamma\beta-\gamma{n}}{1+(l\sigma(x))^{n}}\frac{\nabla{\sigma(x)}}{\sigma(x)}
\frac{(l\sigma(x))^{\beta}(1+(l\sigma(x))^{n})^{\frac{\gamma\beta}{n}}}
{\varphi(|B(x,l\sigma(x))|)}\int_{B(x,l\sigma(x))}f(y)\mathrm{d}y\notag\\
&~~~+\beta\frac{\nabla{\sigma(x)}}{\sigma(x)}
\frac{(l\sigma(x))^{\beta}(1+(l\sigma(x))^{n})^{\frac{\gamma\beta}{n}}}
{\varphi(|B(x,l\sigma(x))|)|B(x,l\sigma(x))|}\int_{B(x,l\sigma(x))}f(y)\mathrm{d}y\notag\\
&~~~+\frac{(l\sigma(x))^{\beta}(1+(l\sigma(x))^{n})^{\frac{\gamma\beta}{n}}}
{\varphi(|B(x,l\sigma(x))|)|B(x,l\sigma(x))|}\int_{B(x,l\sigma(x))}\nabla{f(y)}\mathrm{d}y\notag\\
&~~~+n\frac{\nabla{\sigma(x)}}{\sigma(x)}
\frac{(l\sigma(x))^{\beta}(1+(l\sigma(x))^{n})^{\frac{\gamma\beta}{n}}}
{\varphi(|B(x,l\sigma(x))|)}\notag\\
&~~~\times\left(\frac{1}{|\partial{B(x,l\sigma(x))|}}
\int_{\partial{B(x,l\sigma(x))}}f(y)\mathrm{d}\mathcal{H}^{n-1}(y)-
\frac{1}{|{B(x,l\sigma(x))|}}\int_{B(x,l\sigma(x))}{f(y)}\mathrm{d}y\right).
\end{align}
Applying Green's first identity there holds
\begin{align}\label{302}
\int_{\partial{B(x,l\sigma(x))}}{f(y)}\frac{\partial{\mu}}{\partial{\nu}}(y)\mathrm{d}\mathcal{H}^{n-1}(y)
=\int_{{B(x,l\sigma(x))}}(f(y)\Delta{\mu}(y)+\nabla{f}(y)\cdot{\nabla{\mu}(y)})\mathrm{d}y,
\end{align}
where $\nu(y)=\frac{y-x}{l\sigma(x)}$ denotes the unit outer normal of $B(x,l\sigma(x))$. If we choose $\mu(y)=\frac{|y-x|^{2}}{2}$, then
$\frac{\partial{\mu}}{\partial{\nu}}(y)=l\sigma(x)$, $\Delta{\mu}(y)=n$, $\nabla{\mu}(y)=y-x$.
It follows from the inequality \eqref{302} that
\begin{align}\label{3002}
&~~~\frac{1}{|\partial{B(x,l\sigma(x))|}}
\int_{\partial{B(x,l\sigma(x))}}f(y)\mathrm{d}\mathcal{H}^{n-1}(y)-
\frac{1}{|{B(x,l\sigma(x))|}}\int_{B(x,l\sigma(x))}{f(y)}\mathrm{d}y\notag\\
&=\frac{1}{n}\frac{1}{|{B(x,l\sigma(x))|}}\int_{{B(x,l\sigma(x))}}(y-x)\cdot\nabla{f}(y)\mathrm{d}y.
\end{align}

If $0<l\sigma(x)<1$ and $0\leq\gamma\leq{n}$, then we can easily check that $\frac{(l\sigma(x))^{n}}{(1+(l\sigma(x))^{n})^{1-\frac{\gamma}{n}}}\leq{1}$. Thus, from $0<l<1$ and $|\nabla\sigma(x)|=1$ for almost every $x\in{\Omega}$, we have
\begin{align}\label{303}
|\nabla{K_{\varphi,\beta}^{l}f(x)}|
&\leq\gamma{n}\frac{(l\sigma(x))^{n}}{(1+(l\sigma(x))^{n})^{1-\frac{\gamma}{n}}}
\frac{(l\sigma(x))^{\beta-1}(1+(l\sigma(x))^{n})^{\frac{\gamma}{n}(\beta-1)}}
{\varphi(|B(x,l\sigma(x))|)|B(x,l\sigma(x))|}\int_{B(x,l\sigma(x))}|f(y)|\mathrm{d}y\notag\\
&~~~+\beta\varphi(|B(x,l\sigma(x))|)^{\frac{1}{n}}
\frac{(l\sigma(x))^{\beta-1}(1+(l\sigma(x))^{n})^{\frac{\gamma}{n}(\beta-1)}}
{\varphi(|B(x,l\sigma(x))|)|B(x,l\sigma(x))|}\int_{B(x,l\sigma(x))}|f(y)|\mathrm{d}y\notag\\
&~~~+\frac{(l\sigma(x))^{\beta}(1+(l\sigma(x))^{n})^{\frac{\gamma\beta}{n}}}{\varphi(|B(x,l\sigma(x))|)|B(x,l\sigma(x))|}
\int_{B(x,l\sigma(x))}|\nabla{f(y)}|\mathrm{d}y\notag\\
&~~~+\frac{1}{\sigma(x)}\frac{(l\sigma(x))^{\beta}(1+(l\sigma(x))^{n})^{\frac{\gamma\beta}{n}}}{\varphi(|B(x,l\sigma(x))|)|B(x,l\sigma(x))|}
\int_{{B(x,l\sigma(x))}}|y-x||\nabla{f}(y)|\mathrm{d}y\notag\\
&\leq{(\gamma{n}+2\beta){M}_{\varphi,\beta-1,\Omega}{f}(x)+2{M}_{\varphi,\beta,\Omega}{|\nabla{f}|}(x)}.
\end{align}

If $l\sigma(x)\geq1$, then using the inequalities \eqref{3001} and \eqref{3002}, we obtain
\begin{eqnarray*}
\begin{split}
|\nabla{K_{\varphi,\beta}^{l}f(x)}|
&\leq \gamma{n}\frac{(l\sigma(x))^{n}}{1+(l\sigma(x))^{n}}\frac{1}{l\sigma(x)}
\frac{(l\sigma(x))^{\beta}(1+(l\sigma(x))^{n})^{\frac{\gamma\beta}{n}}}
{\varphi(|B(x,l\sigma(x))|)|B(x,l\sigma(x))|}\int_{B(x,l\sigma(x))}|f(y)|\mathrm{d}y\notag\\
&~~~+\beta\frac{1}{l\sigma(x)}
\frac{(l\sigma(x))^{\beta}(1+(l\sigma(x))^{n})^{\frac{\gamma\beta}{n}}}
{\varphi(|B(x,l\sigma(x))|)|B(x,l\sigma(x))|}\int_{B(x,l\sigma(x))}|f(y)|\mathrm{d}y\notag\\
&~~~+\frac{(l\sigma(x))^{\beta}(1+(l\sigma(x))^{n})^{\frac{\gamma\beta}{n}}}{\varphi(|B(x,l\sigma(x))|)|B(x,l\sigma(x))|}
\int_{B(x,l\sigma(x))}|\nabla{f(y)}|\mathrm{d}y\notag\\
&~~~+\frac{1}{\sigma(x)}\frac{(l\sigma(x))^{\beta}(1+(l\sigma(x))^{n})^{\frac{\gamma\beta}{n}}}{\varphi(|B(x,l\sigma(x))|)|B(x,l\sigma(x))|}
\int_{{B(x,l\sigma(x))}}|y-x||\nabla{f}(y)|\mathrm{d}y\notag\\
&\leq{(\gamma{n}+\beta){M}_{\varphi,\beta,\Omega}{f}(x)+2{M}_{\varphi,\beta,\Omega}{|\nabla{f}|}(x)}.
\end{split}
\end{eqnarray*}
This derives the conclusions of $(i)$ and $(ii)$ for smooth function.

Now, we consider the case $f\in{W^{1,p}(\Omega)}$. We only give the proof of $(i)$ because the conclusion of $(ii)$ can be easily obtained by using the similar arguments as the case $0<l\sigma(x)<1$. For $f\in{W^{1,p}(\Omega)}$, there exists a sequence $\{\psi_{j}\}_{j=1}^{\infty}\in{W^{1,p}(\Omega)\cap{\mathcal{C}^{\infty}(\Omega)}}$ such that $\psi_{j}\rightarrow{f}$ on $W^{1,p}(\Omega)$ as $j\rightarrow{\infty}$, then for each $x\in{\Omega}$, one can get that $K_{\varphi,\beta}^{l}f(x)=\lim\limits_{j\rightarrow{\infty}}K_{\varphi,\beta}^{l}\psi_{j}(x)$. Thus, applying the inequality \eqref{303} we have
\begin{align}\label{304}
|\nabla{K_{\varphi,\beta}^{l}\psi_{j}}(x)|
\leq{(\gamma{n}+2\beta){M}_{\varphi,\beta-1,\Omega}{\psi_{j}}(x)+2{M}_{\varphi,\beta,\Omega}{|\nabla\psi_{j}|}(x)}.
\end{align}

If the condition $(a)$ holds. Let $\frac{1}{\tilde{q}}=\frac{1}{p}-\frac{1}{n}$. Clearly, $\frac{1}{q}=\frac{1}{\tilde{q}}-\frac{\beta-1}{n}$. Then by the inequality \eqref{304}, Lemma \ref{lem2.1}, and the Sobolev embedding property ($\|{\psi_{j}}\|_{L^{\tilde{{q}}}(\Omega)}\leq{C\|{\psi_{j}}\|_{W^{1,p}(\Omega)}}$), we have
\begin{align}\label{3004}
\|\nabla{K_{\varphi,\beta}^{l}\psi_{j}}\|_{L^{q}(\Omega)}
&\leq{(\gamma{n}+2\beta)\|{M}_{\varphi,\beta-1,\Omega}{\psi_{j}}\|_{L^{q}(\Omega)}
+2\|{M}_{\varphi,\beta,\Omega}{|\nabla\psi_{j}|}\|_{L^{q}(\Omega)}}\notag\\
&\leq{C\|{\psi_{j}}\|_{L^{\tilde{{q}}}(\Omega)}
+C\|{\nabla\psi_{j}}\|_{L^{p}(\Omega)}}\notag\\
&\leq{C\|{\psi_{j}}\|_{W^{1,p}(\Omega)}}.
\end{align}
Thus, $\{|\nabla{K_{\varphi,\beta}^{l}\psi_{j}}|\}_{j=1}^{\infty}\in{L^{q}(\Omega)}$ is a bounded sequence and has a weakly converging subsequence $\{|\nabla{K_{\varphi,\beta}^{l}\psi_{j_{k}}}|\}_{k=1}^{\infty}\in{L^{q}(\Omega)}$. Since $K_{\varphi,\beta}^{l}\psi_{j}$ converges pointwise to $K_{\varphi,\beta}^{l}f$, then we know that $|\nabla{K_{\varphi,\beta}^{l}}f|$ exists and $|\nabla{K_{\varphi,\beta}^{l}\psi_{j_{k}}}|$ converges weakly to $|\nabla{K_{\varphi,\beta}^{l}f}|$ on $L^{q}(\Omega)$ as $k\rightarrow{\infty}$. Lemma \ref{lem2.1} together with the Sobolev embedding property and the sublinearity of maximal operators implies that
\begin{eqnarray*}
\begin{split}
\|{M}_{\varphi,\beta-1,\Omega}{\psi_{{j}_{k}}}-{M}_{\varphi,\beta-1,\Omega}f\|_{L^{q}(\Omega)}
&\leq{\|{M}_{\varphi,\beta-1,\Omega}({\psi_{{j}_{k}}}-f)\|_{L^{q}(\Omega)}}\\
&\leq{C\|{\psi_{{j}_{k}}}-f\|_{L^{\tilde{{q}}}(\Omega)}}\\
&\leq{C\|{\psi_{{j}_{k}}}-f\|_{W^{1,p}(\Omega)}},
\end{split}
\end{eqnarray*}
and
\begin{eqnarray*}
\begin{split}
\|{M}_{\varphi,\beta,\Omega}|\nabla\psi_{{j}_{k}}|-{M}_{\varphi,\beta,\Omega}|\nabla{f}|\|_{L^{q}(\Omega)}
\leq{C\||\nabla{\psi_{{j}_{k}}}|-|\nabla{f}|\|_{L^{p}(\Omega)}}
\leq{C\|{\psi_{{j}_{k}}}-f\|_{W^{1,p}(\Omega)}}.
\end{split}
\end{eqnarray*}

If the condition $(b)$ holds. Assume that $\frac{1}{q^{\ast}}=\frac{1}{p}-\frac{\beta}{n}$. Obviously, $q<{q^{\ast}}$. Then from H\"{o}lder's inequality, $|\Omega|<{\infty}$, the inequality \eqref{304}, and Lemma \ref{lem2.1}, it follows that
\begin{align}\label{3005}
\|\nabla{K_{\varphi,\beta}^{l}\psi_{j}}\|_{L^{q}(\Omega)}
&\leq{(\gamma{n}+2\beta)\|{M}_{\varphi,\beta-1,\Omega}{\psi_{j}}\|_{L^{q}(\Omega)}
+2\|{M}_{\varphi,\beta,\Omega}{|\nabla\psi_{j}|}\|_{L^{q}(\Omega)}}\notag\\
&\leq{C\|{\psi_{j}}\|_{L^{p}(\Omega)}
+C|\Omega|^{\frac{1}{q}-\frac{1}{q^{\ast}}}\|{M}_{\varphi,\beta,\Omega}
{|\nabla\psi_{j}|}\|_{L^{q^{\ast}}(\Omega)}}\notag\\
&\leq{C\|{\psi_{j}}\|_{W^{1,p}(\Omega)}}.
\end{align}
Thus, $\{|\nabla{K_{\varphi,\beta}^{l}\psi_{j}}|\}_{j=1}^{\infty}\in{L^{q}(\Omega)}$ is a bounded sequence and has a weakly converging subsequence $\{|\nabla{K_{\varphi,\beta}^{l}\psi_{j_{k}}}|\}_{k=1}^{\infty}\in{L^{q}(\Omega)}$. Since $K_{\varphi,\beta}^{l}\psi_{j}\xrightarrow{j\rightarrow{\infty}}{K_{\varphi,\beta}^{l}f}$, then $|\nabla{K_{\varphi,\beta}^{l}}f|$ exists and as $k\rightarrow{\infty}$, $|\nabla{K_{\varphi,\beta}^{l}\psi_{j_{k}}}|\rightarrow{|\nabla{K_{\varphi,\beta}^{l}f}|}$ weakly on $L^{q}(\Omega)$. Applying Lemma \ref{lem2.1}, $|\Omega|<{\infty}$, H\"{o}lder's inequality, and the sublinearity of maximal operators, we have
\begin{eqnarray*}
\begin{split}
\|{M}_{\varphi,\beta-1,\Omega}{\psi_{{j}_{k}}}-{M}_{\varphi,\beta-1,\Omega}f\|_{L^{q}(\Omega)}
&\leq{\|{M}_{\varphi,\beta-1,\Omega}({\psi_{{j}_{k}}}-f)\|_{L^{q}(\Omega)}}\\
&\leq{C\|{\psi_{{j}_{k}}}-f\|_{L^{p}(\Omega)}}\\
&\leq{C\|{\psi_{{j}_{k}}}-f\|_{W^{1,p}(\Omega)}},
\end{split}
\end{eqnarray*}
and
\begin{eqnarray*}
\begin{split}
\|{M}_{\varphi,\beta,\Omega}|\nabla\psi_{{j}_{k}}|-{M}_{\varphi,\beta,\Omega}|\nabla{f}|\|_{L^{q}(\Omega)}
\leq{C\|{M}_{\varphi,\beta,\Omega}(|\nabla\psi_{{j}_{k}}|-|\nabla{f}|)\|_{L^{q^{\ast}}(\Omega)}}
\leq{C\|{\psi_{{j}_{k}}}-f\|_{W^{1,p}(\Omega)}}.
\end{split}
\end{eqnarray*}
Therefore, $(\gamma{n}+2\beta){M}_{\varphi,\beta-1,\Omega}{\psi_{{j}_{k}}}
+2{M}_{\varphi,\beta,\Omega}|\nabla\psi_{{j}_{k}}|\rightarrow{(\gamma{n}+2\beta)
{M}_{\varphi,\beta-1,\Omega}f+2{M}_{\varphi,\beta,\Omega}|\nabla{f}|}$ on $L^{q}(\Omega)$ as $k\rightarrow{\infty}$. Then applying Lemma \ref{lem2.3} to the inequality \eqref{304} with $g_{k}=|\nabla{K_{\varphi,\beta}^{l}\psi_{j_{k}}}|$ and $f_{k}=(\gamma{n}+2\beta){M}_{\varphi,\beta-1,\Omega}{\psi_{j_{k}}}
+2{M}_{\varphi,\beta,\Omega}{|\nabla\psi_{j_{k}}|}$, we get the inequality \eqref{303}. The proof is complete.
\end{proof}

\begin{lemma}\label{lem4.1}
Let $f\in{L^{p}(\Omega)}$, $n\geq{2}$, $p>{\frac{n}{n-1}}$, and $0<l<1$. Then

(i)~~If $0<l\sigma(x)<1$, $1\leq{\beta}<\min\{{n-\frac{2n}{p(n-1)}},\frac{n-1}{p}\}+1$, and $\frac{1}{q}=\frac{1}{p}-\frac{\beta-1}{n}$, then $|\nabla{K_{\varphi,\beta}^{l}f}|\in{L^{q}(\Omega)}$. Moreover, for almost every $x\in{\Omega}$, there is a positive constant $C$ which depends on $\gamma$ and $n$, such that
$$
|\nabla{K_{\varphi,\beta}^{l}f}(x)|
\leq{C({M}_{\varphi,\beta-1,\Omega}{f}(x)+{S}_{\varphi,\beta-1,\Omega}{f}(x))}.
$$

(ii)~~If $l\sigma(x)\geq1$, $1\leq{\beta}<\min\{{n-\frac{2n}{p(n-1)}},\frac{n-1}{p}\}$, and $\frac{1}{q}=\frac{1}{p}-\frac{\beta}{n}$, then $|\nabla{K_{\varphi,\beta}^{l}f}|\in{L^{q}(\Omega)}$. Moreover, there is a constant $C>0$, depending on $\gamma$ and $n$, such that for almost every $x\in{\Omega}$, there holds
$$
|\nabla{K_{\varphi,\beta}^{l}f}(x)|
\leq{C({M}_{\varphi,\beta,\Omega}{f}(x)+{S}_{\varphi,\beta,\Omega}{f}(x))}.
$$
\end{lemma}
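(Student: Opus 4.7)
The plan is to mirror the proof of Lemma \ref{lem3.1}, but because $f \in L^{p}(\Omega)$ has no classical gradient, I would replace the use of Green's first identity by the divergence theorem so that $\nabla f$ never appears. First suppose $f \in L^{p}(\Omega) \cap \mathcal{C}^{\infty}(\Omega)$. By the Leibniz and chain rules, the identity (3.1) from the proof of Lemma \ref{lem3.1} still holds pointwise, giving a four-term decomposition $\nabla K_{\varphi,\beta}^{l}f(x) = I_{1}(x) + I_{2}(x) + I_{3}(x) + I_{4}(x)$, where $I_{1}, I_{2}$ come from differentiating the normalizing factor $(l\sigma(x))^{\beta}(1+(l\sigma(x))^{n})^{\gamma\beta/n}/(\varphi(|B|)|B|)$, $I_{3}$ is the spherical mean arising from differentiating the ball radius, and $I_{4}$ carries the factor $\int_{B}\nabla f(y)\mathrm{d}y$. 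The key move is to apply the divergence theorem, $\int_{B}\nabla f(y)\mathrm{d}y = \int_{\partial B} f(y)\nu(y)\mathrm{d}\mathcal{H}^{n-1}(y)$, so that $|I_{4}(x)|$ is controlled by $\frac{(l\sigma(x))^{\beta}(1+(l\sigma(x))^{n})^{\gamma\beta/n}}{\varphi(|B|)|B|}\int_{\partial B}|f|\mathrm{d}\mathcal{H}^{n-1}$.

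Next I would bound each $I_{j}$ separately in the two cases using $|\partial B|/|B| = n/(l\sigma(x))$ together with the elementary comparisons between $\varphi(|B|) = (1+\varpi_{n}(l\sigma)^{n})^{\gamma}$ and $\varphi(|\partial B|) = (1+n\varpi_{n}(l\sigma)^{n-1})^{\gamma}$, and between $(1+(l\sigma)^{n})^{\gamma\beta/n}$ and $(1+(l\sigma)^{n})^{\gamma(\beta-1)/n}$. In case (i), with $0 < l\sigma(x) < 1$, all these factors are bounded above and below by constants depending only on $\gamma$ and $n$; pulling out one factor of $1/(l\sigma(x))$ from $|\partial B|/|B|$ to convert $r^{\beta}$ to $r^{\beta-1}$, the spherical terms $I_{3}$ and (after divergence) $I_{4}$ are each bounded by $C S_{\varphi,\beta-1,\Omega}f(x)$, and the "outer derivative" terms $I_{1}, I_{2}$ by $C M_{\varphi,\beta-1,\Omega}f(x)$. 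In case (ii), with $l\sigma(x) \geq 1$, I would not extract the factor $1/(l\sigma)$; instead the $r^{\beta}$ weight is preserved and the same four terms are bounded by $C(M_{\varphi,\beta,\Omega}f(x) + S_{\varphi,\beta,\Omega}f(x))$, after observing that the $1/\sigma$ arising in $I_{1},I_{2}$ is at most $l/(l\sigma) \leq 1$.

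Finally, to pass from smooth to arbitrary $f \in L^{p}(\Omega)$, I would choose $\{\psi_{j}\} \subset L^{p}(\Omega) \cap \mathcal{C}^{\infty}(\Omega)$ with $\psi_{j} \to f$ in $L^{p}$. The pointwise bound applied to $\psi_{j}$ combined with Lemmas \ref{lem2.1} and \ref{lem2.2} (applied with the appropriate index $\beta-1$ in case (i) or $\beta$ in case (ii)) gives a uniform $L^{q}(\Omega)$ estimate on $|\nabla K_{\varphi,\beta}^{l}\psi_{j}|$, so a subsequence $|\nabla K_{\varphi,\beta}^{l}\psi_{j_{k}}|$ converges weakly in $L^{q}(\Omega)$. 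Because $K_{\varphi,\beta}^{l}\psi_{j_{k}}(x) \to K_{\varphi,\beta}^{l}f(x)$ pointwise, the weak limit must be $|\nabla K_{\varphi,\beta}^{l}f|$, and the right-hand side $C(M_{\varphi,\beta-1,\Omega}\psi_{j_{k}} + S_{\varphi,\beta-1,\Omega}\psi_{j_{k}})$ converges in $L^{q}$ by Lemmas \ref{lem2.1} and \ref{lem2.2} and the sublinearity of the maximal operators. Lemma \ref{lem2.3} then transfers the pointwise bound to the limit.

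The main obstacle I anticipate is the careful bookkeeping in the second step: the four terms carry different combinations of $\sigma(x)^{-1}$, $(l\sigma)^{\beta}$, $(1+(l\sigma)^{n})$ to various powers involving $\gamma$ and $\gamma\beta/n$, and one must regroup them so that exactly the right $M_{\varphi,\beta',\Omega}$ or $S_{\varphi,\beta',\Omega}$ expression with $\beta' \in \{\beta-1,\beta\}$ appears, with all remaining factors uniformly bounded by a constant depending only on $\gamma$ and $n$; this is precisely where the dichotomy $0 < l\sigma < 1$ versus $l\sigma \geq 1$ becomes essential and why the two cases land on different $M$ and $S$ indices.
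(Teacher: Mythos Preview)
Your proposal is correct and follows essentially the same approach as the paper: start from the identity \eqref{301}, replace the $\int_{B}\nabla f$ term via Gauss's (divergence) theorem to obtain a spherical integral, bound the resulting four terms by $M_{\varphi,\beta-1,\Omega}f+S_{\varphi,\beta-1,\Omega}f$ or $M_{\varphi,\beta,\Omega}f+S_{\varphi,\beta,\Omega}f$ according to whether $l\sigma(x)<1$ or $l\sigma(x)\ge 1$, and then pass from smooth to general $f\in L^{p}(\Omega)$ by approximation together with Lemmas \ref{lem2.1}, \ref{lem2.2}, and \ref{lem2.3}. The bookkeeping you flag is exactly what the paper carries out, via the explicit estimates $\varphi(|\partial B|)|\partial B|/(\varphi(|B|)|B|)\lesssim n(1+n)^{\gamma}/(l\sigma(x))$ when $l\sigma(x)<1$ and $\le n(1+n)^{\gamma}$ when $l\sigma(x)\ge 1$.
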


\begin{proof}
Assume that $f\in{L^{p}(\Omega)\cap{\mathcal{C}^{\infty}(\Omega)}}$. Using Gauss's theorem, there holds
$$
\int_{B(x,l\sigma(x))}\nabla{f(y)}\mathrm{d}y=\int_{\partial{B(x,l\sigma(x))}}f(y)\nu(y)\mathrm{d}\mathcal{H}^{n-1}(y),
$$
where $\nu(y)=\frac{y-x}{l\sigma(x)}$ denotes the unit outer normal of $B(x,l\sigma(x))$. This equality together with the inequality \eqref{301} implies that
\begin{eqnarray*}
\begin{split}
\nabla{K_{\varphi,\beta}^{l}f(x)}
&=\frac{\gamma\beta-\gamma{n}}{1+(l\sigma(x))^{n}}\frac{\nabla{\sigma(x)}}{\sigma(x)}
\frac{(l\sigma(x))^{\beta}(1+(l\sigma(x))^{n})^{\frac{\gamma\beta}{n}}}
{\varphi(|B(x,l\sigma(x))|)}\int_{B(x,l\sigma(x))}f(y)\mathrm{d}y\\
&~~~+(\beta-n)\frac{\nabla{\sigma(x)}}{\sigma(x)}
\frac{(l\sigma(x))^{\beta}(1+(l\sigma(x))^{n})^{\frac{\gamma\beta}{n}}}
{\varphi(|B(x,l\sigma(x))|)|B(x,l\sigma(x))|}\int_{B(x,l\sigma(x))}f(y)\mathrm{d}y\\
&~~~+\frac{(l\sigma(x))^{\beta}(1+(l\sigma(x))^{n})^{\frac{\gamma\beta}{n}}}
{\varphi(|B(x,l\sigma(x))|)|B(x,l\sigma(x))|}
l\nabla\sigma(x)\int_{\partial{B(x,l\sigma(x))}}f(y)\mathrm{d}\mathcal{H}^{n-1}(y)\\
&~~~+\frac{(l\sigma(x))^{\beta}(1+(l\sigma(x))^{n})^{\frac{\gamma\beta}{n}}}
{\varphi(|B(x,l\sigma(x))|)|B(x,l\sigma(x))|}
\int_{\partial{B(x,l\sigma(x))}}f(y)\nu(y)\mathrm{d}\mathcal{H}^{n-1}(y).
\end{split}
\end{eqnarray*}
From $0<l<1$ and $|\nabla\sigma(x)|=1$ for almost every $x\in{\Omega}$, we have
\begin{eqnarray*}
\begin{split}
|\nabla{K_{\varphi,\beta}^{l}f(x)}|
&\leq (\gamma{n}+n)\frac{1}{l\sigma(x)}
\frac{(l\sigma(x))^{\beta}(1+(l\sigma(x))^{n})^{\frac{\gamma\beta}{n}}}{\varphi(|B(x,l\sigma(x))|)|B(x,l\sigma(x))|}\int_{B(x,l\sigma(x))}|f(y)|\mathrm{d}y\\
&~~~+\frac{\varphi(|\partial{B(x,l\sigma(x))}|)|\partial{B(x,l\sigma(x))}|}{\varphi(|B(x,l\sigma(x))|)|B(x,l\sigma(x))|}
\frac{(l\sigma(x))^{\beta}(1+(l\sigma(x))^{n})^{\frac{\gamma\beta}{n}}}{\varphi(|\partial{B(x,l\sigma(x))}|)|\partial{B(x,l\sigma(x))}|}\\
&~~~~~~\times\int_{\partial{B(x,l\sigma(x))}}|f(y)|\mathrm{d}\mathcal{H}^{n-1}(y)\\
&~~~+\frac{\varphi(|\partial{B(x,l\sigma(x))}|)|\partial{B(x,l\sigma(x))}|}{\varphi(|B(x,l\sigma(x))|)|B(x,l\sigma(x))|}
\frac{(l\sigma(x))^{\beta}(1+(l\sigma(x))^{n})^{\frac{\gamma\beta}{n}}}{\varphi(|\partial{B(x,l\sigma(x))}|)|\partial{B(x,l\sigma(x))}|}\\
&~~~~~~\times\int_{\partial{B(x,l\sigma(x))}}|f(y)||\nu(y)|\mathrm{d}\mathcal{H}^{n-1}(y).
\end{split}
\end{eqnarray*}

If $0<l\sigma(x)<1$, then $\varphi(|B(x,l\sigma(x))|)\sim{1}$. Thus
$$
\frac{\varphi(|\partial{B(x,l\sigma(x))}|)|\partial{B(x,l\sigma(x))}|}
{\varphi(|B(x,l\sigma(x))|)|B(x,l\sigma(x))|}
\lesssim(1+n\varpi_{n}(l\sigma(x))^{n-1})^{\gamma}\frac{n\varpi_{n}(l\sigma(x))^{n-1}}{\varpi_{n}(l\sigma(x))^{n}}
\leq{\frac{n(1+n)^{\gamma}}{l\sigma(x)}}.
$$
Hence,
\begin{align}\label{401}
&~~~~|\nabla{K_{\varphi,\beta}^{l}f(x)}|\notag\\
&\leq C\varphi(|B(x,l\sigma(x))|)^{\frac{1}{n}}
\frac{(l\sigma(x))^{\beta-1}(1+(l\sigma(x))^{n})^{\frac{\gamma}{n}(\beta-1)}}
{\varphi(|B(x,l\sigma(x))|)|B(x,l\sigma(x))|}\int_{B(x,l\sigma(x))}|f(y)|\mathrm{d}y\notag\\
&~~~+C\varphi(|B(x,l\sigma(x))|)^{\frac{1}{n}}
\frac{(l\sigma(x))^{\beta-1}(1+(l\sigma(x))^{n})^{\frac{\gamma}{n}(\beta-1)}}{\varphi(|\partial{B(x,l\sigma(x))}|)|\partial{B(x,l\sigma(x))}|}
\int_{\partial{B(x,l\sigma(x))}}|f(y)|\mathrm{d}\mathcal{H}^{n-1}(y)\notag\\
&\leq{C({M}_{\varphi,\beta-1,\Omega}{f}(x)+{S}_{\varphi,\beta-1,\Omega}{f}(x))}.
\end{align}

If $l\sigma(x)\geq1$, then
$$
\frac{\varphi(|\partial{B(x,l\sigma(x))}|)|\partial{B(x,l\sigma(x))}|}{\varphi(|B(x,l\sigma(x))|)|B(x,l\sigma(x))|}
\leq\left(\frac{1+n\varpi_{n}(l\sigma(x))^{n-1}}{\varpi_{n}(l\sigma(x))^{n}}\right)^{\gamma}\frac{n\varpi_{n}(l\sigma(x))^{n-1}}{\varpi_{n}(l\sigma(x))^{n}}
\leq{n(1+n)^{\gamma}}.
$$
Thus,
\begin{eqnarray*}
\begin{split}
|\nabla{K_{\varphi,\beta}^{l}f(x)}|
&\leq C\frac{(l\sigma(x))^{\beta}(1+(l\sigma(x))^{n})^{\frac{\gamma\beta}{n}}}
{\varphi(|B(x,l\sigma(x))|)|B(x,l\sigma(x))|}\int_{B(x,l\sigma(x))}|f(y)|\mathrm{d}y\\
&~~~+C\frac{(l\sigma(x))^{\beta}(1+(l\sigma(x))^{n})^{\frac{\gamma\beta}{n}}}{\varphi(|\partial{B(x,l\sigma(x))}|)|\partial{B(x,l\sigma(x))}|}
\int_{\partial{B(x,l\sigma(x))}}|f(y)|\mathrm{d}\mathcal{H}^{n-1}(y)\\
&\leq{C({M}_{\varphi,\beta,\Omega}{f}(x)+{S}_{\varphi,\beta,\Omega}{f}(x))}.
\end{split}
\end{eqnarray*}
So we obtain the conclusions of $(i)$ and $(ii)$ as $f\in{L^{p}(\Omega)\cap{\mathcal{C}^{\infty}(\Omega)}}$.

We now consider the case $f\in{L^{p}(\Omega)}$. Since the proof of $(ii)$ is similar to the proof of $(i)$, so it suffices to prove the case $0<l\sigma(x)<1$. For $f\in{L^{p}(\Omega)}$, there exists a sequence $\{\psi_{j}\}_{j=1}^{\infty}$ of functions on ${L^{p}(\Omega)\cap{\mathcal{C}^{\infty}(\Omega)}}$ such that $\psi_{j}\rightarrow{f}$ on $L^{p}(\Omega)$ as $j\rightarrow{\infty}$, then for every $x\in{\Omega}$, one can conclude that $K_{\varphi,\beta}^{l}f(x)=\lim\limits_{j\rightarrow{\infty}}K_{\varphi,\beta}^{l}\psi_{j}(x)$. Thus, according to the inequality \eqref{401} we have
\begin{align}\label{402}
|\nabla{K_{\varphi,\beta}^{l}\psi_{j}}(x)|
\leq{C({M}_{\varphi,\beta-1,\Omega}{\psi_{j}}(x)+{S}_{\varphi,\beta-1,\Omega}{\psi_{j}}(x))},
\end{align}
for almost every $x\in{\Omega}$ and $j=1,2,\cdots$, then from this inequality, and Lemmas \ref{lem2.1} and \ref{lem2.2}, we have
\begin{align}\label{403}
\|\nabla{K_{\varphi,\beta}^{l}\psi_{j}}\|_{L^{q}(\Omega)}
\leq{C(\|{M}_{\varphi,\beta-1,\Omega}{\psi_{j}}\|_{L^{q}(\Omega)}+\|{S}_{\varphi,\beta-1,\Omega}{\psi_{j}}\|_{L^{q}(\Omega)})}
\leq{C\|\psi_{j}\|_{L^{p}(\Omega)}}.
\end{align}
Thus, $\{|\nabla{K_{\varphi,\beta}^{l}\psi_{j}}|\}_{j=1}^{\infty}$ is a bounded sequence on ${L^{q}(\Omega)}$ and has a weakly converging subsequence $\{|\nabla{K_{\varphi,\beta}^{l}\psi_{j_{k}}}|\}_{k=1}^{\infty}\in{L^{q}(\Omega)}$. Since $K_{\varphi,\beta}^{l}\psi_{j}\xrightarrow{j\rightarrow{\infty}}{K_{\varphi,\beta}^{l}f}$, then $|\nabla{K_{\varphi,\beta}^{l}}f|$ exists and $|\nabla{K_{\varphi,\beta}^{l}\psi_{j_{k}}}|\xrightarrow{k\rightarrow{\infty}}{|\nabla{K_{\varphi,\beta}^{l}f}|}$ on $L^{q}(\Omega)$. Using the sublinearity of maximal operators and Lemmas \ref{lem2.1}-\ref{lem2.2}, we get that
\begin{eqnarray*}
\begin{split}
\|{M}_{\varphi,\beta-1,\Omega}{\psi_{{j}_{k}}}-{M}_{\varphi,\beta-1,\Omega}f\|_{L^{q}(\Omega)}
\leq{\|{M}_{\varphi,\beta-1,\Omega}({\psi_{{j}_{k}}}-f)\|_{L^{q}(\Omega)}}
\leq{C\|{\psi_{{j}_{k}}}-f\|_{L^{p}(\Omega)}},
\end{split}
\end{eqnarray*}
and
\begin{eqnarray*}
\begin{split}
\|{S}_{\varphi,\beta-1,\Omega}{\psi_{{j}_{k}}}-{S}_{\varphi,\beta-1,\Omega}f\|_{L^{q}(\Omega)}
\leq{\|{S}_{\varphi,\beta-1,\Omega}({\psi_{{j}_{k}}}-f)\|_{L^{q}(\Omega)}}
\leq{C\|{\psi_{{j}_{k}}}-f\|_{L^{p}(\Omega)}}.
\end{split}
\end{eqnarray*}
Therefore, ${M}_{\varphi,\beta-1,\Omega}{\psi_{{j}_{k}}}+{S}_{\varphi,\beta-1,\Omega}{\psi_{{j}_{k}}}\rightarrow{{M}_{\varphi,\beta-1,\Omega}f
+{S}_{\varphi,\beta-1,\Omega}f}$ on $L^{q}(\Omega)$ as $k\rightarrow{\infty}$. Then applying Lemma \ref{lem2.3} to the inequality \eqref{402} with $g_{k}=|\nabla{K_{\varphi,\beta}^{l}\psi_{j_{k}}}|$ and $f_{k}=C({M}_{\varphi,\beta-1,\Omega}{\psi_{j_{k}}}+{S}_{\varphi,\beta-1,\Omega}{\psi_{j_{k}}})$, we get the inequality \eqref{401}. The proof is finished.
\end{proof}

Now, we give the proofs of Theorem \ref{thm0} and Theorem \ref{thm2}.

\begin{proof}[The proof of Theorem \ref{thm0}]
We only give the proof of $(i)$ because $(ii)$ can be proved similarly. Without loss of generality, we may assume that ${f}\geq{0}$. Let $l_{k}(k=1,2,\cdots)$ be an enumeration of the rationals between 0 and 1. Denote ${M}_{\varphi,\beta,\Omega}{f}(x)=\sup\limits_{k\geq1}K_{\varphi,\beta}^{l_{k}}f(x)$ for each $x\in{\Omega}$. It follows from Lemma \ref{lem3.1} $(i)$ that $|\nabla{K_{\varphi,\beta}^{l_{k}}f}|\in{L^{q}(\Omega)}$ for all $k\geq1$ and for almost every $x\in{\Omega}$,
$$
|\nabla{K_{\varphi,\beta}^{l_{k}}f}(x)|
\leq{(\gamma{n}+2\beta){M}_{\varphi,\beta-1,\Omega}{f}(x)+2{M}_{\varphi,\beta,\Omega}{|\nabla{f}|}(x)}.
$$
For $k\geq1$, we define the function $u_{k}:\Omega\rightarrow{[-\infty,\infty]}$ by $u_{k}(x)=\max\limits_{1\leq{i}\leq{k}}K_{\varphi,\beta}^{l_{i}}f(x)$. Then $\{u_{k}\}_{k=1}^{\infty}$ is an increasing sequence of functions converging to ${M}_{\varphi,\beta,\Omega}{f}$ pointwisely. Thus, for almost every $x\in{\Omega}$ and $k=1,2,\cdots$, we obtain
\begin{align}\label{306}
|\nabla{u_{k}}(x)|
\leq\max\limits_{1\leq{i}\leq{k}}|\nabla{K_{\varphi,\beta}^{l_{i}}f}(x)|
\leq{(\gamma{n}+2\beta){M}_{\varphi,\beta-1,\Omega}{f}(x)+2{M}_{\varphi,\beta,\Omega}{|\nabla{f}|}(x)}.
\end{align}
From the fact that ${u_{k}}(x)\leq{{M}_{\varphi,\beta,\Omega}{f}(x)}$ for $k=1,2,\cdots$, the inequalities \eqref{3004} (resp., \eqref{3005}) and \eqref{306}, and Lemma \ref{lem2.1}, it follows that
\begin{eqnarray*}
\begin{split}
\|{u_{k}}\|_{W^{1,q}(\Omega)}
=\|{u_{k}}\|_{L^{q}(\Omega)}+\|\nabla{u_{k}}\|_{L^{q}(\Omega)}
\leq{C\|f\|_{L^{p}(\Omega)}+C\|f\|_{W^{1,p}(\Omega)}}
\leq{C\|f\|_{W^{1,p}(\Omega)}}.
\end{split}
\end{eqnarray*}
Thus, $\{{u_{k}}\}_{k=1}^{\infty}$ is a bounded sequence on $W^{1,q}(\Omega)$ such that $u_{k}\rightarrow{{M}_{\varphi,\beta,\Omega}{f}}$ as $k\rightarrow{\infty}$. It follows from the weak compactness argument that ${M}_{\varphi,\beta,\Omega}{f}\in{W^{1,q}(\Omega)}$ and $|\nabla{u_{k}}|\rightarrow{|\nabla{{M}_{\varphi,\beta,\Omega}{f}}|}$ on $L^{q}(\Omega)$ as $k\rightarrow{\infty}$. The rest of the proof applies the same arguments as in the final part of the proof of Lemma \ref{lem3.1} for the inequality \eqref{306}. This completes the proof of Theorem \ref{thm0}.
\end{proof}

\begin{proof}[The proof of Theorem \ref{thm2}]
The proof is analogous to the proof of Theorem \ref{thm0}, but with Lemma \ref{lem3.1} and the inequality \eqref{3004} replaced by Lemma \ref{lem4.1} and the inequality \eqref{403}, respectively. Then we can yield the desired results. The details are omitted here.
\end{proof}

\section{Proofs of Theorems \ref{thm3}-\ref{thm5}}

\noindent

\begin{proof}[The proof of Theorem \ref{thm3}]
We consider two cases $0<{r}<1$ and $1\leq{r}<\mathrm{dist}(x,\Omega^{c})$. As $0<{r}<1$, let $\frac{1}{\tilde{q}}=\frac{1}{p}-\frac{1}{n}$. It is obvious that $\frac{1}{q}=\frac{1}{\tilde{q}}-\frac{\beta-1}{n}$. Then by Lemma \ref{lem2.1}, Theorem \ref{thm0} $(i)$, and the Sobolev embedding property, we have
\begin{eqnarray*}
\begin{split}
\|{M}_{\varphi,\beta,\Omega}{f}\|_{W^{1,q}(\Omega)}
&\leq{\|{M}_{\varphi,\beta,\Omega}{f}\|_{L^{q}(\Omega)}
+(\gamma{n}+2\beta)\|{M}_{\varphi,\beta-1,\Omega}{f}\|_{L^{q}(\Omega)}
+2\|{M}_{\varphi,\beta,\Omega}{|\nabla{f}|}\|_{L^{q}(\Omega)}}\\
&\leq{C\|f\|_{L^{p}(\Omega)}
+C\|f\|_{L^{\tilde{q}}(\Omega)}
+C\|\nabla{f}\|_{L^{p}(\Omega)}}\\
&\leq{C\|f\|_{W^{1,p}(\Omega)}}.
\end{split}
\end{eqnarray*}

As $1\leq{r}<\mathrm{dist}(x,\Omega^{c})$, then using Lemma \ref{lem2.1} and Theorem \ref{thm0} $(ii)$, we have
\begin{eqnarray*}
\begin{split}
\|{M}_{\varphi,\beta,\Omega}{f}\|_{W^{1,q}(\Omega)}
&\leq{\|{M}_{\varphi,\beta,\Omega}{f}\|_{L^{q}(\Omega)}
+(\gamma{n}+\beta)\|{M}_{\varphi,\beta,\Omega}{f}\|_{L^{q}(\Omega)}
+2\|{M}_{\varphi,\beta,\Omega}{|\nabla{f}|}\|_{L^{q}(\Omega)}}\\
&\leq{C\|f\|_{W^{1,p}(\Omega)}}.
\end{split}
\end{eqnarray*}
This yields the conclusion of Theorem \ref{thm3}.
\end{proof}

\begin{proof}[The proof of Theorem \ref{thm4}]
Similarly as in the proof of Theorem \ref{thm3}, we also consider two cases. As $0<{r}<1$, let $\frac{1}{q^{\ast}}=\frac{1}{p}-\frac{\beta}{n}$. Clearly, $q<{q^{\ast}}$. Then from Theorem \ref{thm0} $(i)$, Lemma \ref{lem2.1}, H\"{o}lder's inequality, and $|\Omega|<{\infty}$, it follows that
\begin{eqnarray*}
\begin{split}
\|{M}_{\varphi,\beta,\Omega}{f}\|_{W^{1,q}(\Omega)}
&\leq{\|{M}_{\varphi,\beta,\Omega}{f}\|_{L^{q}(\Omega)}
+(\gamma{n}+2\beta)\|{M}_{\varphi,\beta-1,\Omega}{f}\|_{L^{q}(\Omega)}
+2\|{M}_{\varphi,\beta,\Omega}{|\nabla{f}|}\|_{L^{q}(\Omega)}}\\
&\leq{C|\Omega|^{\frac{1}{q}-\frac{1}{q^{\ast}}}\|{M}_{\varphi,\beta,\Omega}
{f}\|_{L^{q^{\ast}}(\Omega)}+C\|f\|_{L^{p}(\Omega)}
+C|\Omega|^{\frac{1}{q}-\frac{1}{q^{\ast}}}\|{M}_{\varphi,\beta,\Omega}
{|\nabla{f}|}\|_{L^{q^{\ast}}(\Omega)}}\\
&\leq{C\|f\|_{W^{1,p}(\Omega)}}.
\end{split}
\end{eqnarray*}

As $1\leq{r}<\mathrm{dist}(x,\Omega^{c})$, applying Lemma \ref{lem2.1}, H\"{o}lder's inequality, Theorem \ref{thm0} $(ii)$, and $|\Omega|<{\infty}$, there holds
\begin{eqnarray*}
\begin{split}
\|{M}_{\varphi,\beta,\Omega}{f}\|_{W^{1,q}(\Omega)}
&=\|{M}_{\varphi,\beta,\Omega}{f}\|_{L^{q}(\Omega)}
+\|\nabla{M}_{\varphi,\beta,\Omega}{f}\|_{L^{q}(\Omega)}\\
&\leq{C|\Omega|^{\frac{1}{q}-\frac{1}{q^{\ast}}}\|{M}_{\varphi,\beta,\Omega}
{f}\|_{L^{q^{\ast}}(\Omega)}+C|\Omega|^{\frac{1}{q}-\frac{1}{q^{\ast}}}
\|\nabla{M}_{\varphi,\beta,\Omega}{f}\|_{L^{q^{\ast}}(\Omega)}}\\
&\leq{C\|f\|_{L^{p}(\Omega)}+C\|{M}_{\varphi,\beta,\Omega}{f}\|_{L^{q^{\ast}}(\Omega)}
+C\|{M}_{\varphi,\beta,\Omega}{|\nabla{f}|}\|_{L^{q^{\ast}}(\Omega)}}\\
&\leq{C\|f\|_{W^{1,p}(\Omega)}}.
\end{split}
\end{eqnarray*}
The proof is finished.
\end{proof}

\begin{proof}[The proof of Theorem \ref{thm5}]
As in the proof of Theorem \ref{thm3}, we also consider two cases. As $0<{r}<1$, let $\frac{1}{q^{\ast}}=\frac{1}{p}-\frac{\beta}{n}$. Obviously, $q<{q^{\ast}}$. Then it follows from H\"{o}lder's inequality, $|\Omega|<{\infty}$, Theorem \ref{thm2} $(i)$, and Lemmas \ref{lem2.1}-\ref{lem2.2} that
\begin{eqnarray*}
\begin{split}
\|{M}_{\varphi,\beta,\Omega}{f}\|_{W^{1,q}(\Omega)}
&\leq{\|{M}_{\varphi,\beta,\Omega}{f}\|_{L^{q}(\Omega)}
+C\|{M}_{\varphi,\beta-1,\Omega}{f}\|_{L^{q}(\Omega)}
+C\|{S}_{\varphi,\beta-1,\Omega}{f}\|_{L^{q}(\Omega)}}\\
&\leq{C|\Omega|^{\frac{1}{q}-\frac{1}{q^{\ast}}}\|{M}_{\varphi,\beta,\Omega}
{f}\|_{L^{q^{\ast}}(\Omega)}+C\|f\|_{L^{p}(\Omega)}+C\|f\|_{L^{p}(\Omega)}}\\
&\leq{C\|f\|_{L^{p}(\Omega)}}.
\end{split}
\end{eqnarray*}

As $1\leq{r}<\mathrm{dist}(x,\Omega^{c})$, Theorem \ref{thm2} $(ii)$ together with Lemmas \ref{lem2.1}-\ref{lem2.2}, H\"{o}lder's inequality, and $|\Omega|<{\infty}$, we obtain
\begin{eqnarray*}
\begin{split}
\|{M}_{\varphi,\beta,\Omega}{f}\|_{W^{1,q}(\Omega)}
&=\|{M}_{\varphi,\beta,\Omega}{f}\|_{L^{q}(\Omega)}
+\|\nabla{M}_{\varphi,\beta,\Omega}{f}\|_{L^{q}(\Omega)}\\
&\leq{C|\Omega|^{\frac{1}{q}-\frac{1}{q^{\ast}}}\|{M}_{\varphi,\beta,\Omega}
{f}\|_{L^{q^{\ast}}(\Omega)}+C|\Omega|^{\frac{1}{q}-\frac{1}{q^{\ast}}}
\|\nabla{M}_{\varphi,\beta,\Omega}{f}\|_{L^{q^{\ast}}(\Omega)}}\\
&\leq{C\|f\|_{L^{p}(\Omega)}+C\|{M}_{\varphi,\beta,\Omega}{f}\|_{L^{q^{\ast}}(\Omega)}
+C\|{S}_{\varphi,\beta,\Omega}f\|_{L^{q^{\ast}}(\Omega)}}\\
&\leq{C\|f\|_{L^{p}(\Omega)}}.
\end{split}
\end{eqnarray*}
The proof is complete.
\end{proof}

\section{Proof of Theorem \ref{thm6}}

\noindent

To obtain the conclusions of Theorem \ref{thm6}, we need the following lemma.

\begin{lemma}\label{lem6.1}{\textsuperscript{\cite{KM1}}}
Let $\Omega\subset{\mathbb{R}^{n}}$ be an open set and $\Omega\neq{\mathbb{R}^{n}}$. If $f\in{W^{1,p}(\Omega)}$ and $\int_{\Omega}(\frac{f(x)}{\mathrm{dist}(x,\Omega^{c})})^{p}\mathrm{d}x<{\infty}$, then $f\in{W_{0}^{1,p}(\Omega)}$.
\end{lemma}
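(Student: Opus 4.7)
The plan is to establish Lemma \ref{lem6.1} by showing that $f$ is the $W^{1,p}$-limit of a sequence of functions in $C_{0}^{\infty}(\Omega)$, which is the very definition of $W_{0}^{1,p}(\Omega)$. The construction goes through a boundary cutoff followed by mollification, and the Hardy-type hypothesis is precisely what is needed to dispose of the cutoff error.

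First, I would set $\sigma(x)=\mathrm{dist}(x,\Omega^{c})$ and choose a piecewise linear profile $\phi_{k}:[0,\infty)\to[0,1]$ with $\phi_{k}\equiv 0$ on $[0,1/k]$, $\phi_{k}\equiv 1$ on $[2/k,\infty)$, and $|\phi_{k}'|\leq k$. Define $\eta_{k}(x)=\phi_{k}(\sigma(x))$ and $f_{k}=f\eta_{k}$. Because $\sigma$ is 1-Lipschitz with $|\nabla\sigma|\leq 1$ almost everywhere, $\eta_{k}$ is Lipschitz with $|\nabla\eta_{k}|\leq k$ and $\nabla\eta_{k}$ supported in the thin layer $A_{k}=\{1/k\leq\sigma\leq 2/k\}$; moreover $\mathrm{supp}(f_{k})\subseteq\{\sigma\geq 1/k\}$, which has positive distance from $\Omega^{c}$.

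The next step is to prove $f_{k}\to f$ in $W^{1,p}(\Omega)$. By dominated convergence ($|f_{k}|\leq|f|$ and $\eta_{k}\to 1$ pointwise on $\Omega$) one gets $f_{k}\to f$ in $L^{p}(\Omega)$. By the Leibniz rule $\nabla f_{k}=\eta_{k}\nabla f+f\nabla\eta_{k}$; the first term converges to $\nabla f$ in $L^{p}(\Omega)$, again by dominated convergence. For the second term one uses the key pointwise bound
$$
|f(x)\nabla\eta_{k}(x)|^{p}\leq k^{p}|f(x)|^{p}\mathbf{1}_{A_{k}}(x)\leq\left(\frac{|f(x)|}{\sigma(x)}\right)^{p}\mathbf{1}_{A_{k}}(x),
$$
valid because $k\leq 1/\sigma(x)$ on $A_{k}$. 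Since $\mathbf{1}_{A_{k}}\to 0$ pointwise on $\Omega$ (no point of $\Omega$ has $\sigma=0$) and the right-hand side is dominated by the integrable majorant $(|f|/\sigma)^{p}$ supplied by the hypothesis, dominated convergence yields $\|f\nabla\eta_{k}\|_{L^{p}(\Omega)}\to 0$. Hence $f_{k}\to f$ in $W^{1,p}(\Omega)$.

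Finally, each $f_{k}$ must itself be approximated by $C_{0}^{\infty}(\Omega)$ functions. Since $f_{k}$ vanishes on the set $\{\sigma<1/k\}$, extending $f_{k}$ by zero to $\mathbb{R}^{n}$ gives an element of $W^{1,p}(\mathbb{R}^{n})$; mollifying this extension with a standard radially symmetric mollifier of radius $\varepsilon<1/(2k)$ produces a smooth function supported inside $\{\sigma\geq 1/(2k)\}\subset\Omega$ that converges to $f_{k}$ in $W^{1,p}$ as $\varepsilon\to 0$. If $\Omega$ is unbounded, a further multiplication by a standard cutoff $\chi_{R}\in C_{0}^{\infty}(\mathbb{R}^{n})$ with $\chi_{R}=1$ on $B(0,R)$ gives true compact support in $\Omega$, and a diagonal selection yields the desired $C_{0}^{\infty}(\Omega)$-approximants of $f$. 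The only delicate point in the whole argument is the control of $f\nabla\eta_{k}$, which would fail without the Hardy integrability hypothesis; everything else is straightforward truncation and mollification.
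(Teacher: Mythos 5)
Your proof is correct in substance. Note first that the paper does not actually prove this lemma: it is imported from Kinnunen and Martio \cite{KM1}, and your argument is essentially the standard one behind that reference --- truncate $f$ at distance scale $1/k$ from $\Omega^{c}$ via $\eta_{k}=\phi_{k}(\sigma)$, use the Hardy-type hypothesis to kill the commutator term $f\nabla\eta_{k}$, and then mollify the truncations, which are supported at positive distance from $\Omega^{c}$. One small slip in the key estimate: on $A_{k}=\{1/k\leq\sigma\leq 2/k\}$ you have $\sigma\geq 1/k$, hence $1/\sigma\leq k$, so the inequality $k\leq 1/\sigma$ that you invoke is backwards; what is true is $k\leq 2/\sigma$ (from $\sigma\leq 2/k$), which gives
\[
|f(x)\nabla\eta_{k}(x)|^{p}\leq k^{p}|f(x)|^{p}\mathbf{1}_{A_{k}}(x)\leq 2^{p}\left(\frac{|f(x)|}{\sigma(x)}\right)^{p}\mathbf{1}_{A_{k}}(x),
\]
and the harmless extra factor $2^{p}$ does not affect the dominated convergence argument. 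The remaining steps --- that the zero extension of a $W^{1,p}(\Omega)$ function supported at positive distance from $\Omega^{c}$ belongs to $W^{1,p}(\mathbb{R}^{n})$ (this deserves the one-line justification with an intermediate smooth cutoff equal to $1$ on $\{\sigma\geq 1/k\}$ and supported in $\{\sigma>1/(2k)\}$), that mollification at radius $\varepsilon<1/(2k)$ keeps the support inside $\Omega$, and the cutoff at infinity followed by a diagonal selection --- are standard and correctly handled.
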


\begin{proof}[The proof of Theorem \ref{thm6}]
If the condition $(i)$ holds, then by Theorem \ref{thm4}, we know that ${M}_{\varphi,\beta,\Omega}{f}\in{W^{1,q}(\Omega)}$. Next, we consider two cases $0<{r}<1$ and $1\leq{r}<\mathrm{dist}(x,\Omega^{c})$. As $0<{r}<1$, noticing that $(1+r^{n})^{\frac{\gamma}{n}}<2$, then for each $x\in{\Omega}$, there holds
$$
{M}_{\varphi,\beta,\Omega}{f}(x)
<2\mathrm{dist}(x,\Omega^{c}){{M}_{\varphi,\beta-1,\Omega}{f}(x)}.
$$
This inequality together with Lemma \ref{lem2.1} implies that
$$
\int_{\Omega}\left(\frac{{M}_{\varphi,\beta,\Omega}{f}(x)}
{\mathrm{dist}(x,\Omega^{c})}\right)^{q}\mathrm{d}x
<2^{q}\int_{\Omega}({M}_{\varphi,\beta-1,\Omega}{f}(x))^{q}\mathrm{d}x
\leq{C\|{f}\|_{L^{p}(\Omega)}^{q}}
<{\infty}.
$$

As $1\leq{r}<\mathrm{dist}(x,\Omega^{c})$, let $\frac{1}{q^{\ast}}=\frac{1}{p}-\frac{\beta}{n}$. Clearly, $q<{q^{\ast}}$. Then using Lemma \ref{lem2.1}, H\"{o}lder's inequality, and $|\Omega|<{\infty}$, we have
$$
\int_{\Omega}\left(\frac{{M}_{\varphi,\beta,\Omega}{f}(x)}
{\mathrm{dist}(x,\Omega^{c})}\right)^{q}\mathrm{d}x
\leq{C|\Omega|^{1-\frac{q}{q^{\ast}}}
\left(\int_{\Omega}({M}_{\varphi,\beta,\Omega}{f}(x))^{q^{\ast}}\mathrm{d}x\right)^{\frac{q}{q^{\ast}}}}
\leq{C\|{f}\|_{L^{p}(\Omega)}^{q}}
<{\infty}.
$$
Thus, it follows from Lemma \ref{lem6.1} that ${M}_{\varphi,\beta,\Omega}{f}\in{W_{0}^{1,q}(\Omega)}$.

If the condition $(ii)$ holds, then applying Theorem \ref{thm5} instead of Theorem \ref{thm4} and employing the same arguments as in the proof of $(i)$, we can easily obtain the conclusion. The proof is finished.
\end{proof}

\section{Proof of Corollary \ref{coro7}}

\noindent

Before proving  Corollary \ref{coro7}, we first give the following lemmas.

\begin{lemma}\label{lem7.1}
Let $p>{1}$. Then for any $f\in{L^{p}(\Omega)}$, there exists a positive constant $C$ such that
$$
\|{M}_{\varphi,\Omega}{f}\|_{L^{p}(\Omega)}\leq{C\|{f}\|_{L^{p}(\Omega)}}.
$$
\end{lemma}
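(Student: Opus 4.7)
The plan is to reduce the claim to the classical $L^{p}$-boundedness of the Hardy--Littlewood maximal operator by a simple pointwise majorization. The key observation is that $\varphi(t)=(1+t)^{\gamma}\geq 1$ for every $t\geq 0$ and $\gamma\geq 0$. Discarding this factor in the denominator of the defining supremum immediately yields
$$
{M}_{\varphi,\Omega}{f}(x)\leq \sup_{0<r<\mathrm{dist}(x,\Omega^{c})}\frac{1}{|B(x,r)|}\int_{B(x,r)}|f(y)|\mathrm{d}y={M}_{\Omega}{f}(x)
$$
for every $x\in\Omega$. Recording this pointwise inequality would be my first step.

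Next, I would pass from the local to the global Hardy--Littlewood maximal operator. Extending $f$ by zero on $\Omega^{c}$, every admissible ball $B(x,r)$ with $r<\mathrm{dist}(x,\Omega^{c})$ is contained in $\Omega$, so the averages are unchanged and ${M}_{\Omega}{f}(x)\leq{M}{f}(x)$ on $\Omega$. Combining the two inequalities and then applying the classical maximal theorem $\|{M}{f}\|_{L^{p}(\mathbb{R}^{n})}\leq C\|f\|_{L^{p}(\mathbb{R}^{n})}$ for $p>1$, and finally restricting to $\Omega$, finishes the argument.

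There is no real obstacle here: the proof rests solely on the elementary fact that $\varphi\geq 1$ together with the standard maximal inequality. An alternative route, if one prefers to stay inside the local framework, is to observe that the statement is exactly the $\beta=0$ analogue of Lemma \ref{lem2.1} and invoke the Kinnunen--Lindqvist type estimate for $M_{\Omega}$ proved in \cite{KL6} (or \cite[Theorem 2.1]{HKK6} in the limiting case $\beta=0$); the details are entirely parallel to those given in the proof of Lemma \ref{lem2.1}.
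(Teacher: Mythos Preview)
Your proposal is correct and essentially identical to the paper's proof: both use the pointwise bound $M_{\varphi,\Omega}f(x)\leq M_{\Omega}f(x)$ coming from $\varphi\geq 1$, and then invoke the $L^{p}$-boundedness of the local Hardy--Littlewood maximal operator. The only cosmetic difference is that the paper cites \cite[Lemma~3.2]{HLX2} for the last step, whereas you spell out the zero-extension argument to the global maximal theorem.
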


\begin{proof}
Since $\varphi(|B(x,r)|)\geq{1}$, then ${M}_{\varphi,\Omega}{f}(x)\leq{{M}_{\Omega}{f}(x)}$ a.e. $x\in{\Omega}$. Thus, we can derive the conclusion of Lemma \ref{lem7.1} by the fact that $\|{M}_{\Omega}{f}\|_{L^{p}(\Omega)}\leq{C\|{f}\|_{L^{p}(\Omega)}}$ in \cite[Lemma 3.2]{HLX2}.
\end{proof}

Lemma \ref{lem7.1} together with an argument similar to that used in the proof of Lemma \ref{lem3.1} implies that the following lemma is true.

\begin{lemma}\label{lem7.2}
Suppose that $f\in{W^{1,p}(\Omega)}$ with $1<p<{\infty}$ and $0<l<1$. Then $|\nabla{K_{\varphi}^{l}f}|\in{L^{p}(\Omega)}$. Moreover, for almost every $x\in{\Omega}$, there holds
$$
|\nabla{K_{\varphi}^{l}f}(x)|
\leq{\gamma{n}{M}_{\varphi,\Omega}{f}(x)
+2{M}_{\varphi,\Omega}{|\nabla{f}|}(x)}.
$$
\end{lemma}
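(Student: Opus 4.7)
The plan is to mirror the proof of Lemma~\ref{lem3.1} in the case $\beta=0$, where the formulas simplify noticeably, and then to pass from $\mathcal{C}^\infty$ functions to general $W^{1,p}(\Omega)$ functions by a density and weak-compactness argument.

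First I would treat the smooth case $f \in W^{1,p}(\Omega)\cap \mathcal{C}^\infty(\Omega)$. Writing $r=l\sigma(x)$ and $B=B(x,r)$, the Lipschitz function $\sigma$ is differentiable a.e.\ with $|\nabla\sigma|=1$ by Rademacher's theorem, so for a.e.\ $x\in\Omega$ the Leibniz and chain rules give
$$
\nabla K_\varphi^l f(x)=\nabla\bigl[(\varphi(|B|)|B|)^{-1}\bigr]\int_B f\,\mathrm{d}y+\frac{1}{\varphi(|B|)|B|}\nabla\!\int_B f\,\mathrm{d}y,
$$
where the second gradient itself splits into a boundary term (from the varying radius $r=l\sigma(x)$) and an interior term $\int_B \nabla f\,\mathrm{d}y$ (from the translating center). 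Organizing the three contributions as in the derivation of equation \eqref{3001} specialized to $\beta=0$, a multiple of $\nabla\sigma/\sigma$ times the sphere--ball difference appears, and I would apply Green's first identity with $\mu(y)=|y-x|^2/2$, exactly as in \eqref{302}--\eqref{3002}, to rewrite this difference as $\frac{1}{n|B|}\int_B (y-x)\cdot\nabla f(y)\,\mathrm{d}y$.

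Next, bounding each piece: the term proportional to $\frac{r^n}{1+r^n}\cdot\frac{\nabla\sigma}{\sigma}\cdot\frac{\overline{|f|}_B}{\varphi(|B|)}$ is controlled by $\gamma n\, M_{\varphi,\Omega} f(x)$, using the elementary inequality $\frac{r^n}{\sigma(1+r^n)}\leq 1$ valid for $r=l\sigma$ with $l<1$ (checked separately for $\sigma\geq 1$, where the LHS is at most $1/\sigma$, and for $\sigma<1$, where $r^n(1-\sigma)<\sigma^n(1-\sigma)<\sigma$). The Green's identity term is at most $\frac{r}{\sigma}\,M_{\varphi,\Omega}|\nabla f|(x)\leq M_{\varphi,\Omega}|\nabla f|(x)$ since $r/\sigma=l<1$, and the remaining interior gradient term $\frac{1}{\varphi(|B|)|B|}\int_B |\nabla f|$ is directly bounded by $M_{\varphi,\Omega}|\nabla f|(x)$. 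Together these give the claimed pointwise inequality for smooth $f$.

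For general $f\in W^{1,p}(\Omega)$, I would choose $\psi_j\in W^{1,p}(\Omega)\cap\mathcal{C}^\infty(\Omega)$ with $\psi_j\to f$ in $W^{1,p}(\Omega)$; pointwise $K_\varphi^l\psi_j\to K_\varphi^l f$. Applying Lemma \ref{lem7.1} to the pointwise inequality just established yields the uniform bound
$$
\|\nabla K_\varphi^l\psi_j\|_{L^p(\Omega)}\leq \gamma n\,\|M_{\varphi,\Omega}\psi_j\|_{L^p(\Omega)}+2\,\|M_{\varphi,\Omega}|\nabla\psi_j|\|_{L^p(\Omega)}\leq C\|\psi_j\|_{W^{1,p}(\Omega)}.
$$
A weakly convergent subsequence $|\nabla K_\varphi^l\psi_{j_k}|\rightharpoonup|\nabla K_\varphi^l f|$ in $L^p(\Omega)$ exists, and sublinearity of $M_{\varphi,\Omega}$ combined with Lemma \ref{lem7.1} gives strong $L^p$ convergence of the right-hand side to $\gamma n\,M_{\varphi,\Omega}f+2M_{\varphi,\Omega}|\nabla f|$. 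Lemma \ref{lem2.3} then transfers the pointwise inequality to the limit, which also identifies $|\nabla K_\varphi^l f|\in L^p(\Omega)$.

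The main obstacle is the first step: in Lemma \ref{lem3.1} the analogous ``scaling'' term was absorbed into $M_{\varphi,\beta-1,\Omega}$, which is not defined here. One must instead exploit $r=l\sigma$ together with the inequality $\frac{r^n}{\sigma(1+r^n)}\leq 1$ to absorb the prefactor directly into $M_{\varphi,\Omega} f(x)$ with coefficient exactly $\gamma n$; this algebraic point is really the only place where the argument genuinely differs from the proof of Lemma \ref{lem3.1}.
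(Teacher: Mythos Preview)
Your proposal is correct and follows essentially the same route as the paper, which does not give an explicit proof but merely says that Lemma~\ref{lem7.2} follows from ``Lemma~\ref{lem7.1} together with an argument similar to that used in the proof of Lemma~\ref{lem3.1}.'' You have spelled out that argument in the case $\beta=0$, including the one genuinely new point: the scaling term can no longer be absorbed into $M_{\varphi,\beta-1,\Omega}$, and you correctly replace that step by the elementary inequality $\frac{r^n}{\sigma(1+r^n)}\le 1$ for $r=l\sigma$, which also explains why the restriction $0\le\gamma\le n$ from Lemma~\ref{lem3.1}(i) disappears in Lemma~\ref{lem7.2}. The density and weak-compactness passage via Lemmas~\ref{lem7.1} and~\ref{lem2.3} is exactly what the paper intends.
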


\begin{proof}[The proof of Corollary \ref{coro7}]
The proof of $(i)$ is analogous to the proof of Theorem \ref{thm0}, but with Lemma \ref{lem2.1} and Lemma \ref{lem3.1} replaced by Lemma \ref{lem7.1} and Lemma \ref{lem7.2}, respectively. The details are omitted here.

Now, we prove $(ii)$ is correct. By Lemma \ref{lem7.1} and Corollary \ref{coro7} $(i)$, we have
\begin{eqnarray*}
\begin{split}
\|{M}_{\varphi,\Omega}{f}\|_{W^{1,p}(\Omega)}
&\leq{\|{M}_{\varphi,\Omega}{f}\|_{L^{p}(\Omega)}
+\gamma{n}\|{M}_{\varphi,\Omega}{f}\|_{L^{p}(\Omega)}
+2\|{M}_{\varphi,\Omega}{|\nabla{f}|}\|_{L^{p}(\Omega)}}\\
&\leq{C\|f\|_{L^{p}(\Omega)}
+C\|f\|_{L^{{p}}(\Omega)}
+C\|\nabla{f}\|_{L^{p}(\Omega)}}\\
&\leq{C\|f\|_{W^{1,p}(\Omega)}}.
\end{split}
\end{eqnarray*}

Finally, we prove $(iii)$ holds. For $f\in{W_{0}^{1,p}(\Omega)}$, there is a sequence $\{\psi_{j}\}_{j=1}^{\infty}\in{W^{1,p}(\Omega)\cap{\mathcal{C}_{0}^{\infty}(\Omega)}}$ such that $\psi_{j}\rightarrow{f}$ on $W^{1,p}(\Omega)$ as $j\rightarrow{\infty}$, then it follows from Corollary \ref{coro7} $(ii)$ that $\{{M}_{\varphi,\Omega}{\psi_{j}}\}_{j=1}^{\infty}\in{W^{1,p}(\Omega)}$. If $2\mathrm{dist}(x,\Omega^{c})<\mathrm{dist}(\mathrm{supp}(\psi_{j}),\Omega^{c})$, then we can easily get that ${M}_{\varphi,\Omega}{\psi_{j}}(x)=0$. Thus, $\{{M}_{\varphi,\Omega}{\psi_{j}}\}_{j=1}^{\infty}\in{W_{0}^{1,p}(\Omega)}$ is a bounded sequence. Applying the sublinearity of maximal operators and Lemma \ref{lem7.1}, we obtain
$$
\|{M}_{\varphi,\Omega}{\psi_{j}}-{M}_{\varphi,\Omega}f\|_{L^{p}(\Omega)}
\leq{\|{M}_{\varphi,\Omega}({\psi_{j}}-f)\|_{L^{p}(\Omega)}}
\leq{C\|{\psi_{j}}-f\|_{L^{{p}}(\Omega)}}
\leq{C\|{\psi_{j}}-f\|_{W^{1,p}(\Omega)}}.
$$
Therefore, ${M}_{\varphi,\Omega}{\psi_{j}}\rightarrow{M}_{\varphi,\Omega}f$ on $L^{p}(\Omega)$ as $j\rightarrow{\infty}$. By the weak compactness argument, it holds that ${M}_{\varphi,\Omega}f\in{W_{0}^{1,p}(\Omega)}$. The proof is complete.
\end{proof}

\subsection*{Acknowledgements} This research is supported by the National Natural Science Foundation of China(Grant No. 12361018).

\end{document}